\documentclass[12pt]{article}
\usepackage{times}
\usepackage{amsfonts}
\usepackage{amssymb}
\usepackage{amsmath}
\usepackage{amsthm}
\usepackage[linesnumbered,lined,boxed]{algorithm2e}
\usepackage{color}
\usepackage{enumitem}
\usepackage{graphicx}
\usepackage{cite}
\usepackage{authblk}
\usepackage{mathtools}
\usepackage{url}

\newtheorem{theorem}{Theorem}
\newtheorem{lemma}[theorem]{Lemma}
\newtheorem{proposition}[theorem]{Proposition}

\newcommand{\real}{{\mathbb R}}

\newcommand{\inner}[2]{\left\langle#1,#2\right\rangle}
\newcommand{\Inner}[2]{\left\langle{#1},{#2}\right\rangle}
\newcommand{\biginner}[2]{\big\langle{#1},{#2}\big\rangle}

\DeclareMathOperator{\dom}{dom}
\DeclareMathOperator{\ri}{ri}

\DeclareMathOperator{\diag}{diag}

\DeclareMathOperator{\conv}{conv}

\DeclareMathOperator*{\argmin}{arg\,min}

\DeclareMathOperator{\suchthat}{ST}

\newcommand{\set}[2]{\left\{#1 \; \left|\;\; #2 \right.\right\}}

\newcommand{\norm}[1]{\left\|#1\right\|}

\newcommand{\card}[1]{\left|#1\right|}

\DeclareMathOperator{\proj}{proj}
\newcommand{\orthog}{^{\perp}}

\newcommand{\transpose}{^{\scriptscriptstyle\top}}

\DeclareMathOperator{\identity}{Id}

\mathchardef\mhyphen="2D

\makeatletter
\newcommand{\svdots}{%
  \vbox{\fontsize{\sf@size}{\sf@size pt}\linespread{0.3}\selectfont
    \kern0.2\baselineskip
    \hbox{.}\hbox{.}\hbox{.}%
    \kern0.1\baselineskip
  }%
}
\makeatother

\begin{document}

\title{Asynchronous Parallel Computation of Lagrangian Bounds for 
       Mixed-Integer Stochastic Programs
       by Frank-Wolfe Methods within Projective Hedging}
\author[1]{Jonathan Eckstein}
\author[2]{Jean-Paul Watson}
\author[3]{David L. Woodruff}
\affil[1]{\small Department of Management Science and Information Systems,
          Rutgers Business School Newark and New Brunswick,
          Rutgers University}
\affil[2]{\small Center for Applied Scientific Computing, Lawrence Livermore
          National Laboratory}
\affil[3]{\small Graduate School of Management, University of California
          Davis}

\date{\today}

\maketitle

\vspace{-3ex}

\begin{abstract}
We describe an asynchronous parallel method to calculate Lagrangian bounds for
mixed-integer stochastic programming problems.  As in some prior work, we use
a fully corrective Frank-Wolfe method on subproblems within a decomposition
method, eventually optimizing subproblems for individual scenarios over the
convex hulls of their integer-feasible vertices.  However, instead of using
progressive hedging as our decomposition algorithm, we employ an asynchronous
projective hedging method.  Like our earlier work on projective hedging, our
algorithm need only solve a subset of the scenario subproblems at each
iteration.  Here, we further exploit the capabilities of projective hedging to
create a parallel implementation capable of fully asynchronous
operation. As we demonstrate empirically, this capability can lead to significant
efficiency improvements when the subproblems are solved by Frank-Wolfe and
have highly variable solve times.
\end{abstract}

\section{Introduction}
\label{sec:introduction}
Boland \emph{et al.}~\cite{BCDELL17} described a method for computing tight
bounds for the optimal values of mixed-integer stochastic programming
problems, embedding a fully corrective Frank-Wolfe (FW)
algorithm~\cite{Hol74,vHoh77,LJJ15}, using a standard mixed-integer
programming solver as the linear minimization oracle (LMO) within a
progressive hedging (PH) decomposition procedure~\cite{RW91}.  Here, we
propose a similar algorithm, using the asynchronous projective hedging (APH)
method of~\cite{EWW25} in place of progressive hedging.  This combination has
two advantages: first, as in the computational experiments in~\cite{EWW25}, it
can take advantage of being able to solve only a subset of the scenario
subproblems at each iteration, which can in some cases lead to significant
performance improvements. The second advantage relates to the high
solution-time variability of subproblems involving MILPs.  If we were to
directly adapt our prior implementation approach from~\cite{EWW25} to use FW
and MILP-based subproblems, we would sometimes experience ``bottleneck''
periods in which a large fraction of the available parallel processing
resources would have to idle waiting for the slowest scenario MILPs to solve.
Instead, we use a new implementation technique described in
Section~\ref{sec:implementation_approach}, which constitutes a significant
advance over the implementation in~\cite{EWW25}, more fully exploiting the
asynchronous potential of the APH algorithm.  The computational experiments on
some standard ``SSLP'' test problems~\cite{NS05,NS08,SIPLIB} in
Section~\ref{sec:computational_testing} show how this approach essentially
removes bottlenecks, even though the underlying algorithm from~\cite{EWW25}
contains some synchronous elements.

\section{Problem setup, notation, and background}
\label{sec:problem_setup_and_notation}
We employ the style of stochastic programming formulation originating 
in~\cite{RW91}, using the following particular notation: suppose
that a stochastic programming problem is defined on a finite scenario tree
with stages indexed by $t \in 1..T$ and $\ell$ leaf nodes indexed by $i \in
1..\ell$.  For each node $N$ in the tree, let $t(N)$ denote its time stage.
Further, let $\mathcal{T}_t$ denote the set of nodes at time stage $t$ and let
$\mathcal{U} = \bigcup_{t=1}^{T-1} \mathcal{T}_t$ denote the set of non-leaf
nodes. For every node $N$, also let $\mathcal{D}(N)\subseteq 1..\ell$ denote the
set of leaf nodes that are descendants of $N$.  
For each $i\in 1..\ell$, denote the probability of leaf node $i$ by 
$\pi_i \in\; ]0,1]$, subject to $\sum_{i=1}^\ell \pi_i = 1$.  
For each node $N$ in the
tree, let $\pi(N) = \sum_{i\in\mathcal{D}(N)} \pi_i$ denote its probability.
Assume that all zero-probability nodes have already been pruned from the
scenario tree.  The term ``scenario'' will be taken to be synonymous with
``last-stage node'' and thus with the leaves of the tree.

For simplicity (and without loss of generality), assume that every stage-$t$
node in the tree has the same number of decision variables $n_t \geq 1$, and
let $\bar n \doteq \sum_{t=1}^T n_t$ denote the full number of decision
variables associated with each scenario and $\bar m \doteq \bar n - n_T$ be
similar, excluding the last stage.

The working variables $x$ form a grid consisting of a
subvector $x_{i} \in \real^{\bar n}$ of full root-to-leaf decision variables
for each scenario $i \in 1..\ell$; thus, their total dimension is $n \doteq
\ell \bar n$.  For each $i\in 1..\ell$ and $t\in 1..T$, let $x_{it} \in
\real^{n_t}$ denote the decision variables for scenario $i$, stage $t$, and
$x_{itj} \in \real$, $j\in 1..n_t$, denote the individual elements of this
vector. Define $\overline M$ to be the linear map that drops the last $n_T$
variables from a vector of length $\bar n$, dropping the last-stage variables
from a single scenario, that is, for any scenario $i \in 1..\ell$,
\begin{equation} \label{dropdef}
\overline Mx_{i} = \overline M (x_{i1},\ldots,x_{i,T-1},x_{iT}) 
               = (x_{i1},\ldots,x_{i,T-1}),
\end{equation}
so that
\begin{align*}
\overline{M} &= \big[ \identity_{\bar m} \;\; 0_{\bar m \times n_T}\big] &
&\text{and} &
\overline{M}\transpose &= 
\left[
\begin{array}{c}
\identity_{\bar m} \\ 0_{n_T \times\bar m }
\end{array}
\right],
\end{align*}
where $\identity_a$ denotes the $a \times a$ identity matrix and $0_{a \times
b}$ denotes the $a \times b$ zero matrix.

Let $M$ be the linear operator that, given a vector $x\in\real^n$ comprising a
subvector of length $\bar n$ for each scenario, drops the last-stage variables
from each subvector, so that
\begin{multline} \label{fulldropdef}
Mx = M\big(x_1,\ldots,x_\ell\big) 
   = M \left(
           \begin{array}{c}
             \big(x_{11},\ldots, x_{1,T-1}, x_{1T}\big) \\
             \vdots \\
             \big(x_{\ell 1}, \ldots, x_{\ell,T-1}, x_{\ell,T}\big)
           \end{array}
       \right) \\
   = \left(
           \begin{array}{c}
             \overline M \big(x_{11}, \ldots, x_{1,T-1}, x_{1T}\big) \\
             \vdots \\
             \overline M \big(x_{\ell 1},\ldots, x_{\ell,T-1}, x_{\ell,T}\big)
           \end{array}
       \right) 
   = \left(
           \begin{array}{c}
             \big(x_{11},\ldots, x_{1,T-1}\big) \\
             \vdots \\
             \big(x_{\ell 1},\ldots, x_{\ell,T-1}\big)
           \end{array}
       \right),
\end{multline}
or equivalently
\begin{align*}
M &=
\underbrace{
\left[
\begin{array}{cccc}
\overline{M} \\
& \overline{M} \\
& & \ddots \\
& & & \overline{M}
\end{array}
\right]
}_{\ell \text{~times}}
&
M\transpose &=
\underbrace{
\left[
\begin{array}{cccc}
\overline{M}\transpose \\
& \overline{M}\transpose \\
& & \ddots \\
& & & \overline{M}\transpose
\end{array}
\right]
}_{\ell \text{~times}}.
\end{align*}

The Rockafellar-Wets formulation approach dedicates multiple subvectors of a
decision vector $x\in\real^n$ to each node of the scenario tree; for example,
the root node corresponds to the $\ell$ subvectors $x_{11}, \ldots, x_{\ell
1}$.  For the solution corresponding to $x$ to be implementable for the
stochastic programming problem, all these subvectors must be equal,
$x_{11}=\cdots=x_{\ell 1}$, that is, the first-stage decisions cannot depend
on knowledge of which scenario will eventually transpire.  More generally, for
any non-leaf node $N \in \mathcal{U}$, all the corresponding stage-$t(N)$
decisions $x_{it(N)}$, for $i\in \mathcal{D}(N)$, must be identical, meaning
that the decisions at that node cannot depend on which descendant of $N$
eventually occurs. For vectors $z$ in the range space $\real^m$ of $M$, where $m \doteq \ell\bar m$, use
the same subvector indexing notation (excluding stage $T$), and define a
linear subspace $\mathcal{N} \subset
\real^m$ by
\begin{equation}
\mathcal{N} \doteq \set{z\in\real^m}{(\forall\,N\in\mathcal{U}) \;
                                      \big(\forall\,i,i' \in \mathcal{D}(N)\big) :
                                      z_{it(N)} = z_{i't(N)}}.
\end{equation}
Vectors in $z\in\mathcal{N}$ are called \emph{nonclairvoyant} or
\emph{nonanticipative} in the sense that $z_{it} = z_{i't}$ whenever scenarios
$i$ and $i'$ are indistinguishable at stage $t$.  They thus correspond to
implementable plans for stages $1..(T-1)$, requiring no knowledge of future
events.  Our particular interest is in mixed integer stochastic programs
formulated as
\begin{equation} \label{discreteSP}
\begin{array}{cll}
\displaystyle{\min_{x\in\real^n}} & \sum_{i=1}^\ell \pi_i h_i(x_i) \\
\suchthat & Mx \in \mathcal{N} \\
& x_i \in K_i &\forall\,i\in 1..\ell,
\end{array}
\end{equation}
where, for $i\in 1..\ell$, each $h_i : \real^{\bar n} \to \real$ is a
differentiable convex function and $K_i \subset \real^{\bar n}$ is a
(potentially very large) finite set.  In particular, we take each $K_i$ to be
the set of integer-feasible vertex solutions to a mixed-integer linear
constraint system.  The goal is to closely approximate (from below) the
optimal value of the relaxation
\begin{equation} \label{discreteSPconvexified}
\begin{array}{cll}
\displaystyle{\min_{x\in\real^n}} & \sum_{i=1}^\ell \pi_i h_i(x_i) \\
\suchthat & Mx \in \mathcal{N} \\
& x_i \in \conv K_i &\forall\,i\in 1..\ell,
\end{array}
\end{equation}
where ``$\conv$'' denotes the convex hull operation on a set. The optimal
objective value of~\eqref{discreteSPconvexified} is also the optimal value of
the Lagrangian dual problem
\begin{equation} \label{lagdual1}
\max_{p\in\mathcal{N}\orthog} \left\{
\min_{x \in K_1 \times \cdots \times K_{\ell}}
    \left\{
        p\transpose M x + \sum_{i=1}^{\ell} \pi_i h_i(x_i)
    \right\}
\right\},
\end{equation}
or equivalently
\begin{equation} \label{lagdual}
\max_{p\in\mathcal{N}\orthog} \left\{
\sum_{i=1}^\ell 
       \min_{x_i\in K_i} \big\{ \pi_i h_i(x_i) + p_i\transpose \overline{M} x_i \big\} \right\},
\end{equation}
where $p_i \in \real^{\bar m}$ denotes the subvector of the Lagrange multiplier
vector $p$ corresponding to scenario $i$.  To produce algorithms more closely
matching the original PH algorithm of~\cite{RW91}, it is customary to
use scaled Lagrange multipliers $w \in \real^m$ such that $p_i = \pi_i
w_i$ for each $i \in 1..\ell$.  This change of dual variables may be written
$p = \Pi w$ or $w = \Pi^{-1} p$, where $\Pi \doteq \diag(\pi_1 \identity_{\bar
m}, \pi_2 \identity_{\bar m}, \ldots, \pi_{\ell} \identity_{\bar m})$.
Defining the linear subspace $\mathcal{N}^* \doteq \set{w \in \real^m}{\Pi w
\in \mathcal{N}\orthog}$, an equivalent problem to~\eqref{lagdual} is
\begin{equation} \label{lagdualw}
\max_{w\in\mathcal{N}^*} \left\{
\sum_{i=1}^\ell 
       \pi_i \min_{x_i\in K_i} \big\{ h_i(x_i) + w_i\transpose \overline{M} x_i \big\} \right\}.
\end{equation}

In practice, a good approximation of the optimal (scaled) dual
variables $w^* \in \mathcal{N}^*$ optimizing~\eqref{lagdualw} has been shown
to enable generation of high-quality feasible solutions to~\eqref{discreteSP},
through various follow-on heuristics~\cite{mpisppy23}.

The classical approach to computing such Lagrangian bounds, originating with
the work of Shor~\cite{Shor62} and Ermoliev~\cite{Erm66}, is by applying a
subgradient method to the Lagrange multipliers~$p$.  However, it is difficult
to identify subgradient stepsizes that provide acceptable speed of
convergence, without danger of divergence.  The progressive hedging
algorithm~\cite{RW91} poses a tempting alternative, but simply imposing the
discrete constraints $x_i \in K_i$ in its subproblems violates its convergence
analysis' assumptions and has been shown in practice~\cite{WW11} to lead to
cycling and nonconvergent behavior.  More recently, Boland \emph{et
al.}~\cite{BCDELL17} proposed using PH, but employing an FW method to directly
solve its subproblems over the convexified sets $\conv K_i$. This approach is
in essence equivalent to applying PH to the convex stochastic programming
problem
\begin{equation}
\label{abstractSP}
\begin{array}{ll}
\displaystyle{\min_{x\in\real^n}}\; 
   & \displaystyle{\sum_{i=1}^{\ell} \pi_i f_i\big(x_{i}\big)} \\
\suchthat & Mx \in \mathcal{N}, \vphantom{\displaystyle{\sum_i}}
\end{array}
\end{equation}
where 
\begin{equation} \label{convexifiedfi}
(\forall\,i\in 1..\ell) \quad \quad
f_i(x_i) \doteq
\begin{cases}
h_i(x_i), \;\; & x_i \in \conv K_i \\
+\infty, & \text{otherwise.}
\end{cases}
\end{equation}
In this kind of application of FW, each LMO invocation consists of minimizing
the function $x_i \mapsto \inner{\nabla h_i(\bar x_i)}{x_i}$ over $x_i \in
K_i$ for each $i\in 1..\ell$, where $\bar x_i$ is the current FW iterate.
These minimizations are accomplished by calling a commercial MILP solver, with
each $K_i$ described by a collection of linear and integrality constraints.
We refer to the resulting combined method as ``PH-FW.''

\section{Frank-Wolfe specifics}

To prove convergence of PH-FW, Boland \emph{et al.} solve the resulting
scenario subproblem with the ``fully corrective'' variant of the FW
algorithm~\cite{Hol74,vHoh77,LJJ15}, also called ``simplicial decomposition''
and here abbreviated ``FCFW.'' This class of FW method keeps track of the set
$R$ of LMO solutions obtained so far and obtains its next iterate not by a
step in the direction of the LMO solution, but by solving a continuous problem
optimizing the full nonlinear objective function over $\conv R$.  Our goal
here is to implement a similar method, but replacing PH with the asynchronous
projective hedging (APH) method of~\cite{EWW25} to produce an algorithm more
amenable to asynchronous parallel implementation. We refer to this combination
as ``APH-FW.''

Our current convergence proof for APH-FW is similar in basic principle to the
PH-FW convergence proof in~\cite{BCDELL17}, so we also use a fully corrective
FW method, alternating between linear LMO solves and (essentially) exact
nonlinear solves over the set of vertices collected so far.  However, we use a
``nonlinear-first'' variant that starts this alternation with a nonlinear
solve, whereas~\cite{BCDELL17} uses a ``linear-first'' variant beginning with an
LMO call.  One reason~\cite{BCDELL17} selects the linear-first approach is that,
by choosing the FW starting points for the scenarios $i\in 1..\ell$ 
so that they match the current Lagrange multipliers $w^k$ at the
start of PH iteration $k$, each PH iteration $k$ produces as a
``side effect'' a lower bound on the optimal value of the convex
relaxation~\eqref{discreteSPconvexified} and thus a lower bound on the optimal
value of the mixed-integer stochastic program~\eqref{discreteSP}. This lower
bound is formed by combining the results of each scenario $i$'s first LMO call
in that PH iteration $k$.  

In an alternative, asynchronous implementation
based on the algorithm in~\cite{EWW25}, 
one cannot readily assemble a collection of LMO results
over all scenarios $i \in 1..\ell$ that were computed from the same Lagrange
multiplier estimate $w^k$, because some scenarios $i$ will typically have
their most recent subproblem objective functions derived from older Lagrange
multiplier estimates than others.  Therefore, an immediate sequence of
Lagrangian bounds on the optimal objective value cannot be a low-cost
byproduct of the algorithm, no matter which variant of FW is used.  This
benefit being unavailable, we instead choose the nonlinear-first approach,
because it permits some simplification in the algorithm analysis, and removes
the need for a technical recourse assumption used in~\cite{BCDELL17}, as will
be discussed further below.  We therefore develop a nonlinear-first variant of
FCFW, which is a modest modification to currently described versions, along
with some associated analysis. The first steps in this analysis are some
elementary results regarding when a point is optimal over $\conv K$, for $K
\subset \real^d$, and, for some ``restriction'' $R \subseteq K$ of $K$,
relating optimality over $\conv R$ to optimality over $\conv K$.

\begin{lemma} \label{lem:optCond}
Let $f:\real^d\to\real$ be convex and continuously differentiable, and let
$K \subset \real^d$ be nonempty.  Then a necessary and sufficient condition for
$x^*\in\real^d$ to minimize $f$ over $\conv K$ is
\begin{equation} \label{optK}
(\forall\,x\in K) \quad \inner{\nabla f(x^*)}{x^*} \leq \inner{\nabla f(x^*)}{x}.
\end{equation}
\end{lemma}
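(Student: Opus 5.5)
The plan is to reduce the statement to the standard first-order optimality condition for a differentiable convex function over a convex set, and then pass from $\conv K$ to $K$ using the linearity of $x \mapsto \inner{\nabla f(x^*)}{x}$. Throughout I read ``$x^*$ minimizes $f$ over $\conv K$'' as including the feasibility requirement $x^* \in \conv K$. Condition~\eqref{optK} alone does not force $x^*$ into $\conv K$, so in the sufficiency direction I take $x^* \in \conv K$ as given.

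The first step is to show that~\eqref{optK} is equivalent to the same inequality holding for all $x \in \conv K$. One direction is trivial, since $K \subseteq \conv K$. For the other, let $x = \sum_j \lambda_j x^j$ with $x^j \in K$, $\lambda_j \ge 0$ and $\sum_j \lambda_j = 1$. Multiply each inequality $\inner{\nabla f(x^*)}{x^*} \le \inner{\nabla f(x^*)}{x^j}$ by $\lambda_j$ and sum. This gives $\inner{\nabla f(x^*)}{x - x^*} \ge 0$ for every $x \in \conv K$.

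For sufficiency, I will use the gradient inequality for convex differentiable functions. For any $x \in \conv K$,
\[
f(x) \ge f(x^*) + \inner{\nabla f(x^*)}{x - x^*} \ge f(x^*).
\]
So $x^*$, which lies in $\conv K$, minimizes $f$ there.

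For necessity, I will argue by contradiction. Suppose $x^*$ minimizes $f$ over $\conv K$, but some $\hat x \in K$ has $\inner{\nabla f(x^*)}{\hat x - x^*} < 0$. For $t \in [0,1]$ the point $x^* + t(\hat x - x^*)$ lies in $\conv K$, by convexity and because $x^* \in \conv K$. Differentiability gives
\[
f\big(x^* + t(\hat x - x^*)\big) = f(x^*) + t\inner{\nabla f(x^*)}{\hat x - x^*} + o(t).
\]
Hence the left-hand side is strictly below $f(x^*)$ for all sufficiently small $t > 0$, contradicting optimality.

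None of these steps is a real obstacle. The only point needing care is the implicit hypothesis $x^* \in \conv K$. It is essential in the necessity direction, to keep the segment feasible, and in the conclusion of the sufficiency direction. I would state it explicitly at the start of the proof. Continuity of the gradient is not actually needed; differentiability suffices.
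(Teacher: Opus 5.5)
Your proof is correct and follows essentially the same route as the paper: reduce to the first-order condition $\inner{\nabla f(x^*)}{x - x^*} \geq 0$ on $\conv K$, and pass between $K$ and $\conv K$ by summing the inequalities over a convex combination. The paper cites the normal-cone characterization and Carath\'eodory's theorem, whereas you derive the first-order condition directly and use arbitrary finite combinations. Your remark that $x^* \in \conv K$ must be assumed is well taken: the lemma as literally stated for $x^* \in \real^d$ needs it, and the paper's proof uses it silently when it equates $-\nabla f(x^*) \in N_{(\conv K)}(x^*)$ with the feasible-direction condition. The hypothesis does hold in every later application, since there $\bar x \in \conv R \subseteq \conv K$.
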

\begin{proof}
Using elementary convex analysis, a necessary and sufficient condition for
$x^*$ to minimize $f$ over the convex set $\conv K$ is that $0 \in \nabla
f(x^*) + N_{(\conv K)}(x^*)$, where $N_X$ denotes the normal cone map of the set
$X$. This condition is equivalent to $-\nabla f(x^*) \in N_{(\conv K)}(x^*)$,
which is in turn equivalent to $\nabla f(x^*)$ making a nonnegative inner
product with every feasible direction from $x^*$ within $\conv K$, that is,
\begin{align}
&& (\forall\,x\in \conv K) && \inner{\nabla f(x^*)}{x - x^*} &\geq 0 
   &&&& \label{feasdircond} \\
\Leftrightarrow && (\forall\,x\in \conv K) &&
   \inner{\nabla f(x^*)}{x^*} &\leq \inner{\nabla f(x^*)}{x}.  \label{vareq}
\end{align}
Now, \eqref{optK} is clearly implied by~\eqref{vareq}, because they are
identical except for the ``for all'' quantifier in~\eqref{vareq} ranging over
the larger set $\conv K \supseteq K$.  Thus, \eqref{optK} is necessary for optimality.

Regarding sufficiency, suppose~\eqref{optK} holds and let $x$ be any element
of $\conv K$.  Then $x$ must be expressible as a positive convex combination
of at most $d+1$ points in $K$, that is, for some $\bar{d} \in 1..(d+1)$,
\begin{align*}
x &= \sum_{i=1}^{\bar{d}} \alpha_i x^i, & \text{where} &&
\alpha_1,\ldots,\alpha_{\bar{d}} &> 0 &
x^1,\ldots,x^{\bar{d}} &\in K&
\sum_{i=1}^{\bar{d}} \alpha_i = 1.
\end{align*}
For each $i \in \{1,\ldots,\bar{d}\}$, one has $x^i\in K$ and therefore
from~\eqref{optK} that
\begin{align*}
\inner{\nabla f(x^*)}{x^*} \leq \inner{\nabla f(x^*)}{x^i}.
\end{align*}
Multiplying each of these inequalities by the respective $\alpha_i > 0$ and
summing, one has
\begin{align*}
&&
\sum_{i=1}^{\bar{d}} \alpha_i \inner{\nabla f(x^*)}{x^*} &\leq 
\sum_{i=1}^{\bar{d}} \alpha_i \inner{\nabla f(x^*)}{x^i} &&&& \\
\Leftrightarrow &&
\inner{\nabla f(x^*)}{x^*} 
   &\leq \Inner{\nabla f(x^*)}{\sum_{i=1}^{\bar{d}} \alpha_i x^i} \\
\Leftrightarrow &&
\inner{\nabla f(x^*)}{x^*} &\leq \inner{\nabla f(x^*)}{x},
\end{align*}
where the first equivalence uses that $\sum_{i=1}^{\bar{d}} \alpha_i = 1$ and the
second uses that $x = \sum_{i=1}^{\bar{d}} \alpha_i x^i$.  Since $x\in\conv K$ was
arbitrary, it follows that~\eqref{optK} implies~\eqref{vareq} and is thus a
sufficient condition for optimality.
\end{proof}

\begin{lemma} \label{lem:terminate}
Let $f:\real^d\to\real$ be convex and continuously differentiable and
$\emptyset \neq R \subset K \subset \real^d$.  Suppose that $\bar
x$ minimizes $f$ over $\conv R$ and $\hat x$ minimizes $\inner{\nabla f(\bar
x)}{x}$ over $x \in K$.  If $\hat x \in \conv R$, then $\bar x$ minimizes $f$
over all of $\conv K$.
\end{lemma}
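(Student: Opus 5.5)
The plan is to reduce everything to Lemma~\ref{lem:optCond}, applied twice: once with $R$ in place of $K$ to extract information from the optimality of $\bar x$ over $\conv R$, and once with $K$ itself to conclude optimality of $\bar x$ over $\conv K$. Specifically, it suffices to show that
\begin{equation*}
(\forall\,x\in K) \quad \inner{\nabla f(\bar x)}{\bar x} \leq \inner{\nabla f(\bar x)}{x},
\end{equation*}
since by Lemma~\ref{lem:optCond} (with $x^* = \bar x$) this is sufficient for $\bar x$ to minimize $f$ over $\conv K$.

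First, since $\bar x$ minimizes $f$ over $\conv R$ and $R$ is nonempty, the necessity direction of Lemma~\ref{lem:optCond} applied to the set $R$ gives $\inner{\nabla f(\bar x)}{\bar x} \leq \inner{\nabla f(\bar x)}{y}$ for all $y \in R$. Moreover, the argument inside that lemma's proof shows the inequality extends by convex combination to all $y \in \conv R$; equivalently, one may invoke~\eqref{vareq} with $R$ in place of $K$. Since $\hat x \in \conv R$ by hypothesis, this yields
\begin{equation*}
\inner{\nabla f(\bar x)}{\bar x} \leq \inner{\nabla f(\bar x)}{\hat x}.
\end{equation*}

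Second, since $\hat x$ minimizes the linear function $x \mapsto \inner{\nabla f(\bar x)}{x}$ over $K$, we have $\inner{\nabla f(\bar x)}{\hat x} \leq \inner{\nabla f(\bar x)}{x}$ for every $x \in K$. Chaining the two inequalities gives the displayed condition for all $x\in K$, and Lemma~\ref{lem:optCond} finishes the proof.

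There is no real obstacle. The only point needing care is that Lemma~\ref{lem:optCond} as stated quantifies over $K$ rather than $\conv K$, so one must justify using the inequality at $\hat x \in \conv R$ rather than at points of $R$. This follows either from the equivalent form~\eqref{vareq} established in that proof, or by repeating the short convex-combination argument.
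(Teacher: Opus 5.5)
Your proposal is correct and matches the paper's proof: the paper also writes the first-order condition for $\bar x$ over $\conv R$ as a variational inequality holding on all of $\conv R$, evaluates it at $\hat x \in \conv R$, and chains the result with the linear-minimization optimality of $\hat x$ over $K$. It then concludes by the sufficiency direction of Lemma~\ref{lem:optCond}.
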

\begin{proof}
Having $\hat x$ minimize $\inner{\nabla f(\bar x)}{x}$ over $K$ means that 
\begin{equation} \label{optlinear}
(\forall\,x\in K) \quad 
   \inner{\nabla f(\bar x)}{\hat x} \leq \inner{\nabla f(\bar x)}{x}, 
\end{equation}
while having $\bar x$ minimize $f$ over $\conv R$ means that $\inner{\nabla
f(\bar x)}{x - \bar x} \geq 0$ for all $x \in \conv R$, that is,
\begin{equation}
(\forall\,x\in \conv R) \quad
   \inner{\nabla f(\bar x)}{\bar x} \leq \inner{\nabla f(\bar x)}{x}.  
       \label{restrictedvar}
\end{equation}
Now, if $\hat x \in \conv R$, \eqref{restrictedvar} implies that 
$\inner{\nabla f(\bar x)}{\bar x} \leq \inner{\nabla f(\bar x)}{\hat
x}$, which may be combined with~\eqref{optlinear} to yield
\begin{equation} \label{optcondfull}
(\forall\,x\in K) \quad 
   \inner{\nabla f(\bar x)}{\bar x} \leq \inner{\nabla f(\bar x)}{\hat x} 
                                    \leq \inner{\nabla f(\bar x)}{x}.
\end{equation}
By Lemma~\ref{lem:optCond}, this condition is sufficient for
$\bar x$ to minimize $f$ over all of $\conv K$.
\end{proof}

The next result provides a way of bounding the suboptimality of the current
iterate in FW algorithms.  It is standard and therefore stated
without proof.

\begin{lemma} \label{lem:fwGap}
Suppose that $\emptyset \neq K \subset \real^d$ and $f:\real^d \to \real$ is a
continuously differentiable convex function.  Further,
suppose that $\hat x \in \argmin_{x\in K} \inner{\nabla f(\bar x)}{x}$. Then
\begin{equation} \label{fwgap}
f(\bar x) \leq \inf_{y\in\conv K}\big\{ f(y) \big\} 
             + \inner{\nabla f(\bar x)}{\bar x - \hat x}.
\end{equation}
The quantity $\gamma \doteq \inner{\nabla f(\bar x)}{\bar x - \hat x}$ on the
right side of~\eqref{fwgap} must be nonnegative.  If $\gamma=0$, then $\bar x$
minimizes $f$ over $\conv K$.  Conversely, $\bar x$ minimizing $f$ over $\conv
K$ implies that $\gamma =
\inner{\nabla f(\bar x)}{\bar x - \hat x} = 0$.
\end{lemma}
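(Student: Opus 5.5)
The plan is to combine the gradient inequality for convex functions with the observation, already used in the proof of Lemma~\ref{lem:optCond}, that a linear function attains the same minimum over $K$ as over $\conv K$. Throughout I take $\bar x \in \conv K$. The statement does not say this explicitly, but it is implicit since $\bar x$ is a FW iterate. It is genuinely needed for the nonnegativity and converse claims: without it, $\gamma$ can be negative. Write $g \doteq \nabla f(\bar x)$.

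\emph{Step 1 (linear minimization over the hull).} I first show that $\inner{g}{\hat x} \leq \inner{g}{y}$ for every $y \in \conv K$. Write $y = \sum_j \alpha_j x^j$ as a convex combination of points $x^j \in K$. Each $x^j$ satisfies $\inner{g}{\hat x} \leq \inner{g}{x^j}$ by the choice of $\hat x$. Multiplying by $\alpha_j$ and summing gives the claim, exactly as in the sufficiency half of Lemma~\ref{lem:optCond}.

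\emph{Step 2 (the bound \eqref{fwgap}).} For any $y \in \conv K$, convexity and differentiability give
\[
f(y) \geq f(\bar x) + \inner{g}{y - \bar x}.
\]
By Step~1, the right-hand side is at least $f(\bar x) + \inner{g}{\hat x - \bar x} = f(\bar x) - \gamma$. Taking the infimum over $y \in \conv K$ yields $\inf_{y\in\conv K} f(y) \geq f(\bar x) - \gamma$, which rearranges to \eqref{fwgap}. If the infimum were $-\infty$ this would contradict the finite lower bound just derived, so it is in fact finite.

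\emph{Step 3 (nonnegativity and the two characterizations).}
\begin{itemize}
\item Since $\bar x \in \conv K$, Step~1 with $y = \bar x$ gives $\inner{g}{\hat x} \leq \inner{g}{\bar x}$, that is, $\gamma \geq 0$.
\item If $\gamma = 0$, then $\inner{g}{\bar x} = \inner{g}{\hat x} \leq \inner{g}{x}$ for all $x \in K$, which is precisely condition \eqref{optK} at $x^* = \bar x$. Lemma~\ref{lem:optCond} then shows $\bar x$ minimizes $f$ over $\conv K$. Alternatively, this follows directly from \eqref{fwgap} together with $\bar x \in \conv K$.
\item Conversely, if $\bar x$ minimizes $f$ over $\conv K$, the necessity part of Lemma~\ref{lem:optCond} gives $\inner{g}{\bar x} \leq \inner{g}{\hat x}$, since $\hat x \in K$. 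Hence $\gamma \leq 0$, and combined with $\gamma \geq 0$ we get $\gamma = 0$.
\end{itemize}

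There is no real obstacle here. The only delicate point is recognizing that the implicit hypothesis $\bar x \in \conv K$ is needed for nonnegativity and for the converse, while \eqref{fwgap} itself holds for arbitrary $\bar x$.
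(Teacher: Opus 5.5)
Your proof is correct. The paper states this lemma without proof as standard, and your argument is the standard one it has in mind: the gradient inequality, combined with the fact (as in the sufficiency half of Lemma~\ref{lem:optCond}) that a linear function has the same minimum over $K$ as over $\conv K$.

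Your observation that $\bar x \in \conv K$ is needed for $\gamma \geq 0$ and for both characterizations, though not for \eqref{fwgap} itself, is accurate. It also matches how the paper uses the lemma: the proof of Lemma~\ref{lem:fwGapTerm} explicitly checks $\bar x \in \conv K$ before invoking the lemma's last statement. Your direct route from \eqref{fwgap} for the case $\gamma = 0$ additionally avoids the same unstated membership assumption in the sufficiency part of Lemma~\ref{lem:optCond}.
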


The scalar $\gamma = \inner{\nabla f(\bar x)}{\bar x - \hat x}$ in this result
is referred to as the \emph{Frank-Wolfe gap} and dates back
to~\cite{FW56,DW61,Mang69} and can be used to assess the suboptimality of any
point $\bar x$ over the convex hull of any set $K$ for which a linear
minimization oracle is available.

FCFW-class algorithms for optimizing $f$ over $\conv K$ are organized around
the following fundamental operations:
\begin{enumerate}
\item Minimizing $f$ over $\conv R$, where $R\subseteq K$
\item \label{operationType:linear} Minimizing $\inner{\nabla f(\bar x)}{x}$ over
$x\in K$, where $\bar x \in \conv R$
\item Adjoining solution points obtained from the linear minimizations
(operations of type~\ref{operationType:linear}) to~$R$.
\end{enumerate}
Combining the above lemmas, the following simple result shows that a zero
Frank-Wolfe gap is necessary and sufficient to detect when a point $\bar x$
minimizing $f$ over $\conv R$ also minimizes $f$ over the larger set $\conv
K$:

\begin{lemma} \label{lem:fwGapTerm}
For $f:\real^d\to\real$ convex and continuously differentiable, suppose that
\begin{align*}
\emptyset &\neq R \subseteq K \subset \real^d &
\bar x &\in \conv R &
\hat x &\in \argmin_{x \in K} \inner{\nabla f(\bar x)}{x}.
\end{align*}
Then $\bar x$ minimizes $f$ over all of $\conv K$ if and
only if $\inner{\nabla f(\bar x)}{\bar x - \hat x} = 0$.
\end{lemma}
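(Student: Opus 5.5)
The statement is essentially the two-directional content of Lemma~\ref{lem:fwGap}, specialized to the situation $\bar x \in \conv R \subseteq \conv K$. The plan is therefore to derive each direction directly from Lemma~\ref{lem:fwGap}, using the hypothesis $R \subseteq K$ only to confirm that $\bar x$ is a feasible point of $\conv K$. The sets $R$ and the restriction structure play no further role.

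\emph{Sufficiency.} Suppose $\inner{\nabla f(\bar x)}{\bar x - \hat x} = 0$. Since $\hat x \in \argmin_{x\in K}\inner{\nabla f(\bar x)}{x}$, Lemma~\ref{lem:fwGap} applies with this $\bar x$ and $\hat x$, and inequality~\eqref{fwgap} gives
\begin{equation*}
f(\bar x) \leq \inf_{y\in\conv K} f(y).
\end{equation*}
Because $R \subseteq K$ implies $\conv R \subseteq \conv K$, we have $\bar x \in \conv K$. Thus $\bar x$ attains the infimum and minimizes $f$ over $\conv K$. Alternatively, I could cite the clause ``if $\gamma=0$'' of Lemma~\ref{lem:fwGap} directly. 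The feasibility remark is still what makes ``minimizes'' meaningful, since~\eqref{fwgap} alone would not give it for an infeasible point.

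\emph{Necessity.} Suppose $\bar x$ minimizes $f$ over $\conv K$. By Lemma~\ref{lem:optCond}, condition~\eqref{optK} gives $\inner{\nabla f(\bar x)}{\bar x} \leq \inner{\nabla f(\bar x)}{\hat x}$, because $\hat x \in K$. This means $\gamma \leq 0$. For the reverse inequality, note that $\bar x \in \conv K$ and that the minimum of the linear function $x\mapsto\inner{\nabla f(\bar x)}{x}$ over $\conv K$ equals its minimum over $K$, by the same convex-combination argument used in Lemma~\ref{lem:optCond}. Hence $\inner{\nabla f(\bar x)}{\hat x} \leq \inner{\nabla f(\bar x)}{\bar x}$, so $\gamma \ge 0$. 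Together these give $\gamma=0$; this is exactly the converse clause of Lemma~\ref{lem:fwGap}.

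There is no real obstacle. The only point needing care is the membership $\bar x \in \conv K$, which is what lets both the feasibility conclusion and the nonnegativity $\gamma \geq 0$ go through, and it follows immediately from $R \subseteq K$.
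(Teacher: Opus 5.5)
Your proof is correct and follows the paper's route: the paper likewise observes that $R\subseteq K$ gives $\bar x\in\conv R\subseteq\conv K$, and then invokes the final two-directional clause of Lemma~\ref{lem:fwGap}. Your derivation of the necessity direction via Lemma~\ref{lem:optCond} and the convex-combination argument simply unpacks that clause, and you correctly identify that the feasibility $\bar x\in\conv K$ is the one fact the argument needs.
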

\begin{proof}
Since $R \subseteq K$, one has $\conv R \subseteq \conv K$ and therefore $\bar
x \in \conv K$.  So the last statement of Lemma~\ref{lem:fwGap} states that
$\bar x$ minimizes $f$ over $\conv K$ if and only if $\inner{\nabla f(\bar
x)}{\bar x - \hat x} = 0$.
\end{proof}
This result makes it attractive to terminate the kind of algorithms under
discussion based on the Frank-Wolfe gap rather than by testing membership of
$\hat x \in \argmin_{x \in K} \inner{\nabla f(\bar x)}{x}$ in $\conv R$ or~$R$,
both of which would typically require more burdensome computations.  Of course, numerical
precision issues dictate using some tolerance rather than requiring that the gap be
exactly zero, but such issues would also apply to $\conv R$ or $R$ membership tests.
Another simple consequence of the above lemmas is that whenever the membership
test would indicate termination, the gap test will do so as well (not
considering numerical round-off issues):

\begin{lemma} \label{lem:incMeansGap}
For $f:\real^d\to\real$ convex and continuously differentiable, suppose that
\begin{align*}
\emptyset &\neq R \subseteq K \subset \real^d &
\bar x &\in \argmin_{x\in\conv R} \big\{ f(x) \big\} &
\hat x &\in \argmin_{x \in K} \inner{\nabla f(\bar x)}{x} &
\hat x &\in \conv R,
\end{align*}
that is, minimizing $\inner{\nabla f(\bar x)}{x}$ returns a point already in $\conv
R$. Then $\inner{\nabla f(\bar x)}{\bar x - \hat x} = 0$.
\end{lemma}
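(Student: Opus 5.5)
The plan is to chain Lemma~\ref{lem:terminate} with Lemma~\ref{lem:fwGapTerm}, since the hypotheses of Lemma~\ref{lem:incMeansGap} are exactly those of Lemma~\ref{lem:terminate}. First, because $\bar x$ minimizes $f$ over $\conv R$, $\hat x$ minimizes $\inner{\nabla f(\bar x)}{x}$ over $K$, and $\hat x \in \conv R$, Lemma~\ref{lem:terminate} gives that $\bar x$ minimizes $f$ over all of $\conv K$. (Lemma~\ref{lem:terminate} is stated with $R \subset K$; I read this as allowing $R = K$. If it is read strictly, the case $R = K$ is trivial, since then $\conv R = \conv K$ directly.)

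Second, note that $\bar x \in \conv R$ by construction, so all hypotheses of Lemma~\ref{lem:fwGapTerm} hold. Its ``only if'' direction then yields $\inner{\nabla f(\bar x)}{\bar x - \hat x} = 0$.

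Alternatively, I could argue directly from the inequalities. From~\eqref{restrictedvar}, applied with $x = \hat x \in \conv R$, one gets $\inner{\nabla f(\bar x)}{\bar x} \leq \inner{\nabla f(\bar x)}{\hat x}$, that is, the gap is at most $0$. Conversely, Lemma~\ref{lem:fwGap} says the gap $\gamma$ is always nonnegative: minimizing a linear function over $K$ gives the same value as minimizing it over $\conv K$, and $\bar x \in \conv K$. Together these force $\gamma = 0$.

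There is no real obstacle here. The only point needing care is the $R \subset K$ versus $R \subseteq K$ mismatch noted above.
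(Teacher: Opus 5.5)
Your main argument matches the paper's proof: Lemma~\ref{lem:terminate} gives that $\bar x$ is optimal over $\conv K$, and the ``only if'' direction of Lemma~\ref{lem:fwGapTerm} (using $\bar x\in\conv R$) then forces the Frank-Wolfe gap to be zero. Your alternative direct argument is also correct and slightly more economical: the gap is $\leq 0$ from \eqref{restrictedvar} at $x=\hat x$, and $\geq 0$ because $\bar x\in\conv R\subseteq\conv K$. Reading $R\subset K$ as allowing equality matches the paper's usage.
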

\begin{proof}
In the hypothesized situation, Lemma~\ref{lem:terminate} asserts that $\bar x$
minimizes $f$ over $\conv K$.  Lemma~\ref{lem:fwGapTerm} then
guarantees that $\inner{\nabla f(\bar x)}{\bar x - \hat x} = 0$.
\end{proof}

\noindent The above results permit analysis of the following minor variation
on the FCFW method for minimizing a convex differentiable function $f :
\real^d \to \real$ over the convex hull of $K \subset \real^d$:

\begin{enumerate}[start=0]
\item Start with $j=1$ and any nonempty $R_1 \subseteq K$.
\item \label{item:nonlinFirst:nonlin} Find $\bar x^j \in \argmin_{x\in\conv R_j}
\big\{ f(x) \big\}$.
\item \label{item:nonlinFirst:lin} Find $\hat x^j\in\argmin_{x\in K} \big\{
\inner{\nabla f(\bar x^j)}{x} \big\}$.
\item \label{item:nonlinFirst:term}
If $\inner{\nabla f(\bar x^j)}{\bar x^j - \hat x^j} \leq \tau$,
where $\tau \geq 0$ is some tolerance, return the solution $\bar x^j$
(an approximate solution if $\tau > 0$).
\item \label{item:nonlinFirst:adjoin} Otherwise, adjoin $R_{j+1} \leftarrow
R_j \cup \{\hat x^j\}$, set $j \leftarrow j + 1$, and return to
step~\ref{item:nonlinFirst:nonlin}.
\end{enumerate}

This nonlinear-first method is initialized slightly differently than the more
common, linear-first version of FCFW, with at least one point from $K$, rather
than with an initial solution guess $\bar x^1 \in \conv K$ as in the case of
the ``linear first'' variant.  With the aid of the above lemmas, one can analyze
its behavior:

\begin{proposition} \label{prop:nonlinFirst}
The nonlinear-first FCFW algorithm described immediately above has the
following properties:
\begin{itemize}[nosep]
\item At every iteration $j$ at which the method does not terminate,
$\card{R_{j+1}} = \card{R_j} + 1$.
\item If $K$ is finite, the algorithm must terminate no later than iteration
$\card{K} - \card{R_1} + 1 \leq \card{K}$.
\item The point $\bar x^j$ returned by the algorithm has $f(\bar x^j) \leq
\inf_{x\in\conv K}\big\{f(x)\big\} + \tau$, that is, it minimizes $f$ over
$\conv K$ within an objective function error of $\tau$.
\end{itemize}
\end{proposition}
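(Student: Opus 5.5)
I will establish the three bullets in order, since each relies on the previous one.

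For the first bullet, the key claim is that at any non-terminating iteration $j$ one has $\hat x^j \notin R_j$. Suppose instead that $\hat x^j \in R_j$. Then $\hat x^j \in \conv R_j$. Since $\bar x^j$ minimizes $f$ over $\conv R_j$ and $\hat x^j$ minimizes $\inner{\nabla f(\bar x^j)}{x}$ over $K$, Lemma~\ref{lem:incMeansGap} (with $R = R_j \subseteq K$) gives $\inner{\nabla f(\bar x^j)}{\bar x^j - \hat x^j} = 0 \leq \tau$. The method would therefore have terminated at step~\ref{item:nonlinFirst:term}, a contradiction. Hence $\hat x^j$ is a genuinely new element, and $\card{R_{j+1}} = \card{R_j \cup \{\hat x^j\}} = \card{R_j}+1$. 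Along the way I will note by induction that $R_j \subseteq K$ for all $j$, since $R_1 \subseteq K$ and each $\hat x^j \in K$; this is needed to invoke the lemma.

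For the second bullet, I argue by counting. If the method reaches iteration $j$ without having terminated earlier, then the first bullet gives $\card{R_j} = \card{R_1} + (j-1)$. Since $R_j \subseteq K$, this forces $\card{R_1} + j - 1 \leq \card{K}$, that is, $j \leq \card{K} - \card{R_1} + 1$. Now suppose the method did not terminate at iteration $J \doteq \card{K}-\card{R_1}+1$. Then $R_{J+1}$ would exist with $\card{R_{J+1}} = \card{K}+1$, which is impossible. More simply, at iteration $J$ one has $R_J = K$, so $\hat x^J \in R_J$, and the argument of the first bullet forces termination. The final inequality $\card{K}-\card{R_1}+1 \le \card{K}$ follows from $\card{R_1}\ge 1$.

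For the third bullet, at termination we have $\bar x^j \in \conv R_j \subseteq \conv K$, and $\hat x^j$ is an LMO solution over $K$ at $\bar x^j$. Lemma~\ref{lem:fwGap} then gives $f(\bar x^j) \leq \inf_{y\in\conv K} f(y) + \inner{\nabla f(\bar x^j)}{\bar x^j-\hat x^j}$, and the termination test bounds the last term by $\tau$.

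The only subtle point is the first bullet. There one must observe that $\hat x^j \in R_j$ implies $\hat x^j \in \conv R_j$, so that Lemma~\ref{lem:incMeansGap} applies, and also that the lemma's exact zero gap beats any $\tau \ge 0$. I expect this to be straightforward. I would also remark that the minimizers in steps~\ref{item:nonlinFirst:nonlin} and~\ref{item:nonlinFirst:lin} are assumed to exist. This holds automatically when $K$ is finite, since $\conv R_j$ is then compact; in the general case it is part of the algorithm's specification.
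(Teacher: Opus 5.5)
Your proposal is correct and follows essentially the same route as the paper: Lemma~\ref{lem:incMeansGap} forces termination whenever $\hat x^j \in R_j$, a cardinality count shows $R_{\bar j} = K$ by iteration $\card{K}-\card{R_1}+1$, and Lemma~\ref{lem:fwGap} together with the stopping test gives the $\tau$-optimality bound. Your explicit inductive check that $R_j \subseteq K$, which is needed to invoke the lemma, is a small point of care that the paper leaves implicit.
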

\begin{proof}
Every iteration $j$ of this algorithm sets $\bar x^j \in \argmin_{x\in\conv R_j}
\big\{ f(x) \big\}$ in step~\ref{item:nonlinFirst:nonlin}.  Therefore if
step~\ref{item:nonlinFirst:lin} of any iteration $j$ produces a solution $\hat
x^j$ already in $R_j$, the hypotheses of Lemma~\ref{lem:incMeansGap} are
satisfied with $\bar x = \bar x^j$, $\hat x = \hat x^j$, and $R=R_j$, so the
Frank-Wolfe gap should be zero and the subsequent execution of
step~\ref{item:nonlinFirst:term} should immediately halt the algorithm.
Therefore, if an iteration~$j$ reaches step~\ref{item:nonlinFirst:adjoin},
then it must have been the case that
$\hat x^j \not\in R_j$ and therefore $\card{R_{j+1}} = \card{R_j} + 1$, establishing
the first claim.

Now suppose that $K$ is finite and consider the state of the algorithm at the
beginning of any iteration ${\bar j} \geq 1$.  Every one of the ${\bar j} - 1$
prior iterations $j$ (if any) has added one new point to $R_j$, so
$\card{R_{\bar j}} = \card{R_1} + {\bar j} - 1$. Solving $\card{K} =
\card{R_1} + {\bar j} - 1$ for ${\bar j}$ yields ${\bar j} = \card{K} -
\card{R_1} + 1$. At that iteration, one must have $\card{R_{\bar j}} =
\card{K}$, hence $R_{\bar j} = K$, and step~\ref{item:nonlinFirst:lin} cannot
avoid generating a point $\hat x^{\bar j}$ that is already in $R_{\bar j}$.
Then, as shown at the beginning of this proof, the method will immediately
terminate in step~\ref{item:nonlinFirst:term}.  Thus, the claim that the
algorithm can run at most $\card{K} - \card{R_1} + 1$ iterations has been
established.  Since $R_1$ is assumed nonempty, this number of iterations can
be no more than $\card{K}$, so the second claim is now fully proved.

The final assertion, regarding the optimality of the returned solution $\bar
x^j$, follows immediately from Lemma~\ref{lem:fwGap}.
\end{proof}

\noindent The above analysis resembles that of the linear-first variant
(omitted since that variant of the method is not used here), but is slightly
simpler.

\section{The APH stochastic programming method}

\begin{algorithm}[t]
\label{alg:aph}
\DontPrintSemicolon
\SetCommentSty{rmfamily}
\caption{Asynchronous projective hedging algorithm 
         from~\cite{EWW25}\rule{0pt}{3ex}.}
Start with any $z^0 \in \mathcal{N}$, $w^0 \in \mathcal{N}^*$
   and $\big(x_i^{-1}, y_i^{-1} \in \real^{\bar m}, x_{iT}^{-1} \in \real^{n_T}\big)_{i\in 1..\ell}$  \;
\For{$k=0,1,\ldots$}{
Determine some $I_k \subseteq 1..\ell$  
      \label{line:pickblocks} \;
\For{$i\in I_k$}{
  \tcp{Incorporate results of a solved subproblem} 
  For some $d(i,k) \in 0..k$, set $(x_i^k,x_{iT}^k,y_i^k)
        = \mathcal{S}(i,z_i^{d(i,k)}, w_i^{d(i,k)})$  
      \label{line:incorporate}
} 
\lFor{$i \not\in I_k$}
     {$(x_i^k,x_{iT}^k,y_i^k) = (x_i^{k-1},x_{iT}^{k-1},y_i^{k-1})$} 
      \label{line:rollover}
$u^k = \proj^{(\pi)}_{\mathcal{N}^*}(x_1^k,\ldots,x_\ell^k)$  \label{line:uproj}   \;
$v^k = \proj^{(\pi)}_{\mathcal{N}}(y_1^k,\ldots,y_\ell^k)$    \label{line:vproj}   \;
$\eta_k = \sum_{i=1}^\ell \pi_i\big(\norm{u_i^k}^2 + \xi \norm{v_i^k}^2\big)$ 
      \label{line:phlikedenom}\; 
$\varphi_k = \sum_{i=1}^{\ell} \pi_i (z_i^k-x^k_i)\transpose(w^k_i-y_i^k)$
      \label{line:phlikenumer}\;
For some $\nu_k \in (0,2)$, set
$\theta_k = {\nu_k} \max\left\{0, \varphi_k \right\}/{\eta_k}$ 
      \label{line:phlikestepsize} \;
$z^{k+1} = z^k + \theta_k \xi v^k$ \label{line:phlikezupdate} \;
$w^{k+1} = w^k + \theta_k u^k$ \label{line:phlikewupdate}
} 
\end{algorithm}

Algorithm~\ref{alg:aph} presents the asynchronous projective hedging (APH)
method from~\cite{EWW25} in notation adapted to this paper, solving convex
stochastic programming problems expressed in the
form~\eqref{abstractSP}.  To simplify reasoning about the algorithm, it is
presented as if it is serial; any parallel implementation that has the same
effect will of course be subject to the same convergence analysis.  Noteworthy
points about the algorithm include:

\begin{enumerate}
    \item For each $i\in 1..\ell$ the working variables $x_i^k$ and $y_i^k$
    lie in $\real^{\bar m}$, with the dimension of a single scenario without
    the last-stage variables.  The working variables $x_{iT}^k \in
    \real^{n_T}$ keep track of the last-stage primal variables for scenario
    $i$.

    \item The parameter $\xi$, corresponding to $\gamma^{-1}$ in~\cite{EWW25}, is an
    arbitrary fixed positive scalar controlling the relative weighting of
    primal and dual optimality.

    \item The placeholder function $\mathcal{S}$ on
    line~\ref{line:incorporate} denotes the results of solving a subproblem
    for scenario $i\in 1..\ell$.  In the original implementation
    in~\cite{EWW25}, the operation $\mathcal{S}(i,z_i^{d(i,k)}, w_i^{d(i,k)})$
    takes the form
    \begin{align}
    &\text{Receive~} i, w_i \leftarrow w_i^{d(i,k)}, 
                        z_i \leftarrow z_i^{d(i,k)} \text{~from caller}
                        \label{startstandardsubproblem} \\
    &\text{Select some~} \rho_{ik} \in [\underline{\rho},\overline{\rho}]\\
    &\text{Find~} x_i \in \argmin_{x_i\in\real^{\bar n}} 
    \left\{ f_i(x_i) + w_i\transpose\overline{M}x_i
            + \frac{\rho_{ik}}{2}\norm{\overline{M}x_i - z_i}^2 \right\} 
         \label{scenariomin} \\
     &y_i \leftarrow w_i + \rho_{ik}(\overline{M}x_i - z_i) \label{yupdate} \\
     &\text{Return} \quad (\overline{M}x_i, x_{iT}, y_i),
         \label{endstandardsubproblem}
    \end{align}
    where $0 < \underline{\rho} 
    \leq \overline \rho < \infty$.  In the Lagrangian bound computation
    procedure presented below, we will substitute an FCFW procedure
    for~\eqref{startstandardsubproblem}-\eqref{endstandardsubproblem}.

    \item For brevity, a few details are omitted from the description.  In
    particular, it omits detection of the condition $\eta_k = 0$ which would
    render the calculation on line~\ref{line:phlikestepsize} undefined. Our
    implementation terminates whenever $\eta_k$ falls below a small, user-specified
    threshold, since it can be shown that a zero value implies
    optimality~\cite{EWW25,Eck17}.
    The full details of deciding
    when to terminate the outer ($k$) loop are also omitted.

    \item The operator $\proj^{(\pi)}_{\mathcal{N}}$ on line~\ref{line:vproj}
    performs a projection on the nonclairvoyant subspace $\mathcal{N}$, using
    a metric weighted by the scenario probabilities $\pi$.  At every non-leaf
    tree node $N \in \mathcal{U}$, this projection is accomplished by (using
    the same indexing conventions for $v$ as for the other vectors in
    $\real^m$)
    \begin{equation} \label{phprojformula}
    (\forall\,N\in \mathcal{U}) \;
    \big(\forall\,i\in \mathcal{D}(N)\big) \;
    (\forall\,j\in 1..n_{t(N)}) \quad
    v_{it(N)j}^{k} = \frac{1}{\pi(N)}
       \sum_{i'\in \mathcal{D}(N)} \pi_{i'} y_{i't(N)j}^{k}.
    \end{equation}
    That is, at each non-leaf node $N \in \mathcal{U{}}$, the vectors
    $v_{it(N)}^{k}, i\in\mathcal{D}(N)$ are obtained by taking the
    $\pi$-weighted average, across the scenarios in $\mathcal{D}(N)$, of the
    corresponding elements in $y^{k}$.  The calculation of $u^{k}$ in
    line~\ref{line:uproj} obeys the equations
    \begin{equation} \label{conjproj}
    u^{k} = \proj^{(\pi)}_{\mathcal{N}^*}(x^k) 
            = (\identity - \proj^{(\pi)}_{\mathcal{N}})(x^k)
            = x^k - \proj^{(\pi)}_{\mathcal{N}}(x^k),
    \end{equation}
    and so may be implemented by essentially the same calculation.  These
    computations may be performed efficiently in a parallel setting, as
    in~\cite{mpisppy23,EWW25}.  The remaining calculations on
    lines~\ref{line:phlikedenom}-\ref{line:phlikewupdate} also have efficient
    parallel implementations.

    \item In an asynchronous parallel implementation, the sets $I_k$ of
    scenarios whose results are incorporated into the calculation at iteration
    $k$ are not typically explicitly chosen at iteration $k$.  Instead, the
    scenario calculation for a scenario $i \in I_k$ may have been initiated at
    an earlier iteration $d(i,k) \leq k$, becoming complete between iterations
    $k-1$ and $k$.  

    \item The algorithm is based on the asynchronous abstract projective
    splitting method for monotone operators in~\cite{Eck17}, which is a
    simplified version of the method proposed in~\cite{ComEck15}.  The
    calculations in steps~\ref{line:uproj}-\ref{line:phlikewupdate} project
    the previous estimate $(z^k,w^k)$ of the joint primal-dual solution onto a
    hyperplane separating it from the optimal primal-dual set.
\end{enumerate}

\noindent Convergence of the method is established in the following result,
proved in~\cite{EWW25}:

\begin{proposition} \label{prop:phlikeconverge}
Suppose that a stochastic programming problem formulated
as~\eqref{abstractSP} has a solution and at least one of the following holds:
\begin{enumerate}
\item \label{assump:linear} The functions $f_1,\ldots,f_\ell$ are polyhedral, or
\item \label{assump:slater} $\mathcal{N} \cap \big( \bigtimes_{i=1}^{\ell} \ri
\overline M(\dom f_i) \big) \neq \emptyset$, where the operator $\ri$ denotes
taking the relative interior of a set as defined for example in~\cite[Section
1.4]{BerConvex}.
\end{enumerate}
Also suppose that Algorithm~\ref{alg:aph} operates in accordance with
the following conditions:
\begin{enumerate}
\item There exist $\underline{\nu},\bar{\nu}\in
  \;]0,2[$ such that $\underline{\nu} \leq \nu_k \leq \overline{\nu}$
  for all $k$.
\item The scenario calculation $\mathcal{S}$ is performed as
in~\eqref{startstandardsubproblem}-\eqref{endstandardsubproblem} for some
$\underline{\rho} > 0$ and $\overline{\rho} \in [\underline{\rho},\infty)$
that remain fixed throughout the algorithm.
\item \label{item:quasicyclicagain} For some fixed integer $B \geq 1$, 
$\left(\bigcup_{k=j}^{j + B -1} I_k \right) = 1..\ell$ 
for all $j \geq 0$.
\item \label{item:delayagain} For some fixed integer $D \geq 0$, we have
  $k - d(i,k) \leq D$ for all $i,k$ with $i \in I_k$.
\end{enumerate} 
Then
\begin{align} \label{phlikeconvergence}
z^k &\rightarrow x^\infty &
x^k &\rightarrow x^\infty &
w^k &\rightarrow w^\infty &
v^k &\rightarrow 0 &
u^k &\rightarrow 0,
\end{align}
where $x^\infty = M\bar{x}^*$ for some optimal solution $\bar{x}^*\in\real^n$
of~\eqref{abstractSP}, that is, $x^\infty$ is some optimal solution
to~\eqref{abstractSP} with the last-stage variables deleted, and $w^\infty =
\Pi^{-1} p^*$ for some optimal solution $p^*$ to the Lagrangian dual
problem~\eqref{lagdual}. Furthermore, all limit points (if any) of the
sequence
\begin{equation*}
\Big\{\big((x_i^k,x_{iT}^k) \big)_{i=1}^\ell \Big\}_{k=0}^{\infty}
\end{equation*}
are optimal solutions of~\eqref{abstractSP}.
\end{proposition}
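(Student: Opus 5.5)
This convergence result is the one established in~\cite{EWW25}, so the plan is to reconstruct that argument rather than invent a new one. The framework is the asynchronous projective splitting analysis of~\cite{Eck17}. First, I would recast~\eqref{abstractSP} as a monotone inclusion in the primal-dual space $\mathcal{N}\times\mathcal{N}^*$ with the $\pi$-weighted inner product. The Kuhn--Tucker set $\mathcal{Z}$ consists of pairs $(z,w)$ such that $-w_i\in\overline{M}\,\partial f_i^{\overline M}(z_i)$, roughly $(z,w)$ with $z=M\bar x$ for an optimal $\bar x$ and $w$ a scaled optimal multiplier. I would use hypothesis~\ref{assump:linear} or~\ref{assump:slater} exactly once: to ensure the needed subdifferential sum and chain rule for $\sum_i\pi_i f_i$ composed with the constraint $Mx\in\mathcal{N}$. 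This gives $\mathcal{Z}\neq\emptyset$ and shows that $\mathcal{Z}$ equals the product of the optimal primal set (with the last stage dropped) and the dual optimal set of~\eqref{lagdual}.

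Second, I would check that each subproblem~\eqref{scenariomin}--\eqref{yupdate} yields a pair $(x_i,y_i)$ in the graph of the monotone operator $A_i=\overline M\partial f_i\overline M^{*}$ restricted appropriately. The optimality condition is $0\in\partial f_i(x_i)+\overline M\transpose(w_i+\rho_{ik}(\overline Mx_i-z_i))$, so $-y_i\in A_i(\overline M x_i)$. This holds regardless of which stale $(z^{d(i,k)},w^{d(i,k)})$ was used, and it persists under the rollover on line~\ref{line:rollover}. Monotonicity then shows that the affine function $\varphi$ on line~\ref{line:phlikenumer}, taken as a function of $(z,w)$, satisfies $\varphi\le0$ on $\mathcal{Z}$. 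Its gradient in the subspace geometry is $(\xi v^k,u^k)$ up to scaling, so lines~\ref{line:phlikestepsize}--\ref{line:phlikewupdate} are a relaxed projection onto $\{\varphi\le0\}$. The standard Fej\'er argument then gives $\|p^{k+1}-p^*\|^2\le\|p^k-p^*\|^2-\nu_k(2-\nu_k)\max\{0,\varphi_k\}^2/\eta_k$ for every $p^*\in\mathcal{Z}$. Hence the iterates are bounded and $\max\{0,\varphi_k\}^2/\eta_k\to0$.

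Third comes the main obstacle: turning this summability into $\varphi_k$-based convergence despite delays and partial updates. I would follow~\cite{Eck17}. Using the condition $\rho_{ik}\in[\underline\rho,\overline\rho]$ and nonexpansiveness of the prox-type map, I would bound $\|x_i^k-z_i^k\|$ and $\|y_i^k-w_i^k\|$ by the corresponding quantities at $d(i,k)$ plus accumulated steps $\sum_{j=d(i,k)}^{k}\theta_j\|(\xi v^j,u^j)\|$. The bounds $B$ and $D$ (conditions~\ref{item:quasicyclicagain} and~\ref{item:delayagain}) make this a finite window. The key inequality to aim for is $\varphi_k\ge c\sum_i\pi_i\big(\|\overline Mx_i^{d}-z_i^{d}\|^2\big)-(\text{window terms})$. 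Here I would use that $\varphi$ contains $\rho_{ik}\|x_i-z_i\|^2$ from~\eqref{yupdate}. Combined with the boundedness of $\eta_k$, which follows from bounded iterates, this yields $x^k-z^k\to0$ and $y^k-w^k\to0$, hence $u^k\to0$ and $v^k\to0$, since $z^k\in\mathcal{N}$ and $w^k\in\mathcal{N}^*$.

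Finally, I would take any limit point of $(z^k,w^k)$. By closedness of the graphs of maximal monotone $A_i$ and the vanishing residuals, it lies in $\mathcal{Z}$, and the Fej\'er property upgrades this to convergence of the whole sequence to some $(x^\infty,w^\infty)$. Then $x^k\to x^\infty$ as well. For the limit points of $(x_i^k,x_{iT}^k)$: each $x_i^k$ minimizes the subproblem, so passing to the limit in the subgradient inclusion shows that $-\overline M\transpose w_i^\infty\in\partial f_i$ at the limit. Together with $Mx=x^\infty\in\mathcal{N}$, this gives KKT optimality for~\eqref{abstractSP}.
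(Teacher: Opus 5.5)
Your outline reconstructs the route the paper relies on. The paper gives no proof of its own here; it imports the result from \cite{EWW25}, whose analysis rests on the asynchronous projective splitting theory of \cite{Eck17,ComEck15}. That route consists of an affine separator on $\mathcal{N}\times\mathcal{N}^*$, a relaxed projection, the Fej\'er inequality, a window of length about $B+D$ that absorbs stale inputs, and cluster points in the Kuhn--Tucker set.

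\textbf{The operator is wrong as written.} $\overline M\,\partial f_i\,\overline M\transpose$ sends $s$ to $\overline M\,\partial f_i(s,0)$, so it is the subdifferential of $f_i$ with the last-stage variables frozen at zero. Its resolvent is not the subproblem \eqref{scenariomin}, and $-y_i\in\overline M\,\partial f_i(\overline M\transpose\overline M x_i)$ is false in general. For example, with $f_i(s,t)=(s-t)^2+(t-1)^2$ the subproblem gives $-y_i=s-1$, while $\overline M\,\partial f_i(s,0)=\{2s\}$.

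Since \eqref{scenariomin} also minimizes over $x_{iT}$, the right object is the marginal function $g_i(s)=\inf\{f_i(x_i):\overline M x_i=s\}$, with operator $\partial g_i$ (under qualification, the inverse of $\overline M\,\partial f_i^*\,\overline M\transpose$). From $-\overline M\transpose y_i\in\partial f_i(x_i)$ one checks directly that $-y_i\in\partial g_i(\overline M x_i)$ and $f_i(x_i)=g_i(\overline M x_i)$. This is also why hypothesis~\ref{assump:slater} is stated in terms of $\overline M(\dom f_i)=\dom g_i$.

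\textbf{What the corrected operator requires.} Your last paragraph silently uses two facts that now need justification.
\begin{enumerate}
\item Closedness of the graph of $\partial g_i$ is not automatic, because a marginal function need not be lower semicontinuous. Here it follows because every subproblem \eqref{scenariomin} is presumed solvable for every input. Then $I+\rho^{-1}\partial g_i$ is onto, and $\partial g_i$ is maximal monotone by Minty's theorem.
\item $(x^\infty,w^\infty)\in\mathcal{Z}$ only says that $x^\infty$ minimizes $\sum_i\pi_i g_i$ over $\mathcal{N}$. Writing $x^\infty=M\bar x^*$ with $\bar x^*$ optimal for \eqref{abstractSP} needs the infimum defining each $g_i(x_i^\infty)$ to be attained. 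This again follows from solvability: the subproblem with input $(z_i,w_i)=(x_i^\infty,w_i^\infty)$ returns a minimizer with $\overline M x_i=x_i^\infty$.
\end{enumerate}

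\textbf{Initialization.} Because $(x_i^{-1},y_i^{-1})$ are arbitrary, $\varphi\le 0$ on $\mathcal{Z}$ is guaranteed only once every scenario has been incorporated. By condition~\ref{item:quasicyclicagain} this holds from iteration $B-1$ on, so the Fej\'er argument should start there.
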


Algorithm~\ref{alg:aph} simultaneously supports two forms of asynchrony, which may 
respectively be called
``block'' asynchrony and ``lag'' asynchrony.  Block asynchrony, sometimes
called ``block activation,'' is not asynchrony in the colloquial sense,
but refers only 
to having $I_k \neq 1..\ell$ for at least some~$k$, that is, processing only a subset
of the scenarios at some or all iterations. If the algorithm is operating in a
purely block-asynchronous mode, then $d(i,k) = k$ for all $k \geq 0$ and $i
\in I_k$.  In an implementation of Algorithm~\ref{alg:aph} supporting only block
asynchrony, each iteration directly selects a set of scenarios $I_k$ to
process, initiates their subproblem solves, waits until these subproblem
calculations have all completed, and then proceeds to the projection
calculations on lines~\ref{line:uproj}-\ref{line:phlikewupdate}.

Lag asynchrony, on the other hand, refers to having $d(i,k) < k$ for some $k
\geq 0$ and $i \in I_k$.  For instance, when the iteration counter $k$ 
has some value $k_1$, a parallel implementation might initiate processing of a
subproblem $i$ and, without waiting for $i$'s results, proceed to other
calculations.  Shortly thereafter, for example, some other scenario $i' \neq
i$ might finish, and the algorithm could use the information from $i'$ (and
possibly other recently completed scenarios) to update the Lagrange multiplier
and nonclairvoyant solution estimates (``centers'') to new respective values
$w^{k_1 + 1}$ and $z^{k_1 + 1}$. Eventually, at some iteration $k_2 \geq k_1 +
1$, processing for scenario $i$ will complete and be used (possibly along with
other scenarios) to update the multipliers and centers.  One then has
$d(i,k_2) = k_1$, the iteration number at which the just-completed scenario
solve for $i$ was initiated.

Block asynchrony was the only kind tested in~\cite{EWW25}, for two reasons:
\begin{enumerate}
\item The experiments in~\cite{EWW25} involved only continuous convex
problems, whose scenario solution times had very low
variability.  In such cases, pure block asynchrony should work efficiently,
because hardly any time needs to be spent idling between the solution of the
quickest scenario in $I_k$ and the slowest scenario in $I_k$, and there would
likely be little benefit to moving to full (block combined with lag)
asynchrony.

\item The \texttt{mpi-sppy} software platform~\cite{mpisppy23} used for the tests
in~\cite{EWW25} was originally designed for the standard, fully synchronous PH
algorithm.  While it was readily modified to support block asynchrony,
the platform lacked suitable infrastructure to implement lag asynchrony.
\end{enumerate}

While Algorithm~\ref{alg:aph} supports both block and lag asynchrony, it still
has synchronous elements, in that the coordination operations on
lines~\ref{line:uproj}-\ref{line:phlikewupdate} operate on the working
variables in a fully coordinated manner, much like PH but with some added
complexity.  The method is asynchronous only in that the solution of
subproblems, whose information is incorporated into the main loop on
line~\ref{line:incorporate}, may be de-synchronized from the central updating
process to a degree bounded by the parameters $B$ and $D$ in
Proposition~\ref{prop:phlikeconverge}.  Specifically, $B$ bounds how
unbalanced the algorithm can be in paying attention to the various scenarios,
and $D$ bounds the maximum iterations of delay between initiating a subproblem
solve and incorporating its results into the iterate path. Neither $B$ nor $D$
appears in nor constrains any of the update formulas in the algorithm; these
parameters instead only appear within the convergence analysis of the abstract
projective splitting algorithm in~\cite{ComEck15,Eck17} upon which the
algorithm is based.  By contrast, the asynchronous version of PH proposed
in~\cite{BLetal20}, based on the abstract splitting method in~\cite{ARock},
must underrelax some algorithmic steps by factors related to the equivalent of
$D$.  The method proposed in~\cite{BLetal20} may be viewed as ``more
asynchronous'' than Algorithm~\ref{alg:aph} in that incorporating the results
of a single scenario solve leaves some of the working variables of other
scenarios unchanged (when the stochastic program has more than two time
stages).  However, as we will show in
Section~\ref{sec:implementation_approach}, Algorithm~\ref{alg:aph} is
``asynchronous enough'' that a careful implementation using current HPC
hardware and software tools can achieve a very high level of asynchrony, with
the advantage that theoretical convergence is not dependent on underrelaxation
factors whose values in turn require knowing bounds on the time behavior of
the implementation.

\section{Combined algorithm}
To compute Lagrangian bounds for stochastic integer programming problems, we
now adapt Algorithm~\ref{alg:aph} to problems of the
form~\eqref{discreteSPconvexified} by replacing the subproblem processor
$\mathcal{S}$ specified
in~\eqref{startstandardsubproblem}-\eqref{endstandardsubproblem} with one
using nonlinear-first FCFW.  This scenario subroutine is shown in
Algorithm~\ref{alg:nffcfw}.

~

\begin{algorithm}[ht]
\label{alg:nffcfw}
\SetKw{KwTo}{to}
\SetKw{KwBreak}{break}
\SetCommentSty{rmfamily}
\DontPrintSemicolon
\caption{\rule{0pt}{3ex}Subproblem solution procedure $\mathcal{S}$ for
applying FCFW within solution of~\eqref{discreteSPconvexified}.} 
Receive $i, w_i \leftarrow w_i^{d(i,k)}, 
            z_i \leftarrow z_i^{d(i,k)}$ from caller \;
Select some $\rho_{ik} \in [\underline{\rho},\overline{\rho}]$ \;
Let $h_{ik}$ denote the function 
    $x_i \mapsto f_i(x_i) + w_i\transpose\overline{M}x_i
            + \frac{\rho_{ik}}{2}\norm{\overline{M}x_i - z_i}^2$ \;
\For{$l=0,1,\ldots$}{
    $\bar x_i \leftarrow \displaystyle{\argmin_{x_i \in \conv R_i} 
                           \big\{ h_{ik}(x_i) \big\}}$ 
                           \tcp*[r]{Using a nonlinear convex solver} 
    \lIf{$l \geq l_{\max}$}{\KwBreak} \label{line:hitlmax}
    $\hat x_i \leftarrow 
            \displaystyle{\argmin_{x_i \in K_i} 
                \biginner{\nabla h_{ik}(\bar x_i)}{x_i}}$ \label{line:LMOcall}
            \tcp*[r]{Using a MILP solver} 
    \lIf{$\biginner{\nabla h_{ik}(\bar x_i)}
                   {\bar x_i - \hat x_i} \leq \tau$}{\KwBreak} \label{line:FWbreakout}
    $R_i \leftarrow R_i \cup \{\hat x_i\}$ \label{line:adjoinVertex}
}
$y_i \leftarrow w_i + \rho_{ik}(\overline{M}\bar x_i - z_i)$ \;
\Return{$(\overline{M}\bar x_i, \bar x_{iT}, y_i)$}
\end{algorithm}

~

In Algorithm~\ref{alg:nffcfw}, the tolerance $\tau$ is a small nonnegative
scalar, $l_{\max}$ is a positive integer, and the sets $\{R_i\}_{i=1}^{\ell}$
collect the vertices of the respective sets $\{K_i\}_{i=1}^{\ell}$ encountered
so far. Line~\ref{line:adjoinVertex} expands these sets whenever a new vertex
is encountered.  The $R_i$ persist across calls to the subproblem processor, so
their size increases over time.  The algorithm must initialize each $R_i$ with
at least one element of $\conv K_i$, a straightforward task little different
from the initialization required in the PH-FW algorithm of~\cite{BCDELL17}.

Whenever the LMO call on line~\ref{line:LMOcall} returns a point $\hat x_i$
already in $R_i$ and hence also in $\conv R_i$, then Lemma~\ref{lem:incMeansGap}
asserts that the FW gap $\biginner{\nabla h_{ik}(\bar x_i)}{\bar x_i - \hat
x_i}$ should be zero, meaning that line~\ref{line:FWbreakout} will immediately
terminate the FW ($l$) loop and allow the subroutine to return with an
essentially exact solution.  After the MILP solver has been called $l_{\max}$
times, however, the test on line~\ref{line:hitlmax} will cause the
FW loop to exit, returning a solution of indeterminate quality.

We now state and prove a convergence result for the combined algorithm, using
similar arguments to those in the main convergence result (Proposition 3.3)
in~\cite{BCDELL17}.  Its assumption $\tau=0$ is the same as
in~\cite{BCDELL17}, but the proof does not require the technical recourse 
assumption referred to as ``(17)'' in~\cite{BCDELL17}.  This
difference results not from replacing the PH main loop with 
Algorithm~\ref{alg:aph}, but from using a nonlinear-first variant of FCFW
instead of the linear-first approach in~\cite{BCDELL17}.

\begin{proposition} \label{prop:phlikeconverge2}
Suppose Algorithm~\ref{alg:aph} is applied with $\tau=0$ to a problem in the
form~\eqref{discreteSPconvexified} meeting the assumptions of
Proposition~\ref{prop:phlikeconverge}, with Algorithm~\ref{alg:nffcfw} instead
of~\eqref{startstandardsubproblem}-\eqref{endstandardsubproblem} as the
subproblem solution subroutine $\mathcal{S}$.  Then the combined new algorithm
will have the same asymptotic convergence behavior asserted in{}
Proposition~\ref{prop:phlikeconverge}.
\end{proposition}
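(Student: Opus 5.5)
The plan is to show that, because every $K_i$ is finite and every set $R_i$ persists and only grows, only finitely many calls to Algorithm~\ref{alg:nffcfw} can return anything other than an exact solution of~\eqref{scenariomin}. Beyond some finite iteration the combined method is then literally Algorithm~\ref{alg:aph} with the exact subproblem processor~\eqref{startstandardsubproblem}--\eqref{endstandardsubproblem}. The asymptotic conclusions of Proposition~\ref{prop:phlikeconverge} would then follow by treating the state at that iteration as a new starting point.

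\textbf{Step 1: a call that terminates through the gap test is exact.} Fix a scenario $i$ and a call of Algorithm~\ref{alg:nffcfw}. Write $g_{ik}(x_i) = h_i(x_i) + w_i\transpose\overline{M}x_i + \frac{\rho_{ik}}{2}\norm{\overline{M}x_i - z_i}^2$ for the smooth part of $h_{ik}$; here $\nabla h_{ik}$ means $\nabla g_{ik}$. Inside the $l$ loop the iterate $\bar x_i$ minimizes $g_{ik}$ over $\conv R_i \subseteq \conv K_i$. Suppose the loop exits on line~\ref{line:FWbreakout} with $\tau = 0$. Since the gap is always nonnegative (Lemma~\ref{lem:fwGap}), it equals zero, and Lemma~\ref{lem:fwGapTerm} says $\bar x_i$ minimizes $g_{ik}$ over $\conv K_i$. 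Because $f_i = h_i$ on $\conv K_i$ and $f_i = +\infty$ off it, this is exactly the minimization~\eqref{scenariomin}. The returned triple therefore coincides with~\eqref{endstandardsubproblem}.

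\textbf{Step 2: only finitely many calls exit through line~\ref{line:hitlmax}.} Take any non-terminating pass of the loop, that is, one reaching line~\ref{line:adjoinVertex}. If the LMO point $\hat x_i$ were already in $R_i$, Lemma~\ref{lem:incMeansGap} would force a zero gap and an exit on line~\ref{line:FWbreakout}. So every such pass adds a genuinely new vertex to $R_i$, as in the first claim of Proposition~\ref{prop:nonlinFirst}. A call exiting through line~\ref{line:hitlmax} has made $l_{\max} \geq 1$ such passes, so it enlarges $R_i$ by at least one point. Since $R_i \subseteq K_i$ never shrinks across calls and $\card{K_i} < \infty$, scenario $i$ can have at most $\card{K_i}$ inexact calls. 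Summing over $i$, there are finitely many inexact calls overall. Each of them is incorporated at some iteration, so there is a finite $\bar k$ such that every incorporation on line~\ref{line:incorporate} at any iteration $k > \bar k$ is an exact evaluation of~\eqref{startstandardsubproblem}--\eqref{endstandardsubproblem} at $(z_i^{d(i,k)}, w_i^{d(i,k)})$.

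\textbf{Step 3: restart the analysis (the main obstacle).} I would view iteration $k_0 = \bar k + B + D + 1$ as a fresh start of Algorithm~\ref{alg:aph}, with initial data $z^{k_0}, w^{k_0}$ and the current rolled-over $(x_i, x_{iT}, y_i)$. The conditions $\nu_k \in [\underline{\nu},\overline{\nu}]$, the covering condition with constant $B$, and the delay bound $D$ are shift-invariant. By the covering condition, every scenario's stored triple has been replaced by an exact one by the time the shifted sequence begins.

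The delicate point is lag: an incorporation at $k \geq k_0$ may use $(z^{d(i,k)}, w^{d(i,k)})$ with $d(i,k) < k_0$. I would handle this by checking that the argument of~\cite{EWW25,Eck17} only needs the following properties, from some iteration onward. First, each incorporated pair is an exact resolvent-type evaluation at some actual past iterate no more than $D$ steps old. Second, $B$-cyclic coverage holds. From these the argument yields the separating-hyperplane (Fejér monotonicity) inequality and summability of the step terms. Both properties hold for all $k \geq k_0$, since every $d(i,k) \geq k - D > \bar k$ indexes a genuine iterate of the same sequence. The finitely many earlier iterations, possibly generated with inexact data that need not separate, merely produce some point $(z^{k_0}, w^{k_0})$, which the Fejér argument accepts as an arbitrary starting point.

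Hence~\eqref{phlikeconvergence} holds for the combined method. So does the statement that all limit points of $\big((x_i^k, x_{iT}^k)\big)_{i=1}^{\ell}$ solve~\eqref{abstractSP} with $f_i$ as in~\eqref{convexifiedfi}, that is, the convexified problem~\eqref{discreteSPconvexified}.
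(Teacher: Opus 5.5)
Your proposal is correct and follows the paper's proof: finiteness of the $K_i$, together with persistence of the $R_i$, yields a $\bar k$ after which every subproblem call returns an exact solution of~\eqref{scenariomin}, and Proposition~\ref{prop:phlikeconverge} then applies from that point on. Your Step~3 is more careful than the paper, which simply renumbers $k \leftarrow k - \bar k$ without noting that lagged incorporations may use pre-restart iterates $(z^{d(i,k)}, w^{d(i,k)})$; your fix relies on a claim about the internals of the cited convergence analysis, but the paper's bare renumbering implicitly assumes the same thing.
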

\begin{proof}
For every $i\in 1..\ell$, an element of $K_i$ is added to $R_i$ at each
iteration that the FW gap test $\biginner{\nabla h_{ik}(\bar x_i)}{\bar x_i -
\hat x_i} \leq \tau = 0$ fails to hold.  Since the sets $K_i$ are finite, at
most a finite number of elements may be added to each $R_i$, so there must exist 
some $\bar k$ such that for all iterations $k \geq \bar k$, all the sets
$R_i$ remain unchanged.  For all $i\in 1..\ell$ at such iterations, the point
$\hat x_i$ returned by the first ($l=0$) invocation of the MILP solver on
line~\ref{line:LMOcall} of Algorithm~\ref{alg:nffcfw} must return a point
already in $R_i$.  By Lemma~\ref{lem:incMeansGap}, this situation must result
in a zero Frank-Wolfe gap, and so the subroutine will immediately return with results based
on the current value of $\bar x_i$, which is an exact solution to the
subproblem by Lemma~\ref{lem:terminate}.

As a consequence, the combined algorithm will behave exactly like
Algorithm~\ref{alg:aph} with the exact subproblem solver
from~\eqref{startstandardsubproblem}-\eqref{endstandardsubproblem} at every
iteration $k \geq \bar k$.  Therefore, after iteration $\bar k$,
Proposition~\ref{prop:phlikeconverge} applies after renumbering its
iterations $k \leftarrow k - \bar k$, and that proposition's asymptotic
convergence results hold.
\end{proof}

\section{Parallel implementation}
\label{sec:implementation_approach}
This section describes our approach to implementing the combination of
Algorithm~\ref{alg:aph} and Algorithm~\ref{alg:nffcfw} in an efficient
asynchronous manner scalable to current High-Performance Computing (HPC) or
cluster platforms.  Such systems consist of HPC ``nodes'' connected by a
high-speed network, with each HPC node containing 16 to 128 CPU cores residing
on between 1 and 8 CPU chips.  All the
cores on an HPC node have direct access to the HPC node's entire shared
memory, but employing data stored on other HPC nodes requires use of the
network.\footnote{Here, we use the term ``HPC node'' instead of just ``node'' to
distinguish these hardware nodes from the nodes of the stochastic programming
scenario tree.}

To take advantage of such hardware, we designed our implementation to use
MPI~\cite{Gropp:1994:UMP,mpiforum2023mpi41}.  The fundamental computing cell
within MPI programs is called a ``rank,'' which corresponds to an operating
system process consisting of a single memory address space and one or more parallel
threads of control.  Ranks communicate with one another through MPI's various
messaging capabilities, which include both simple point-to-point messages and
efficient implementations of various collective operations such as global
summations of scalars or vectors.  Because all threads within a rank must have
access to the same shared memory, each rank must be confined within a single
HPC node.  The simplest arrangement is to have one rank per HPC node, each running
as many threads as the CPU cores on the HPC node can support.  However,
in some cases one may wish to pack multiple ranks onto one HPC node; communication
with ranks on the same HPC node uses the same software MPI interface as
communication with ranks on
other HPC nodes, but data are transparently transmitted through on-node
inter-process shared memory buffers rather than through the network.  Up to
modest rank counts, MPI implementations typically run efficiently on ordinary
multi-core CPUs, emulating a networked system.  In this case, all ranks are in
fact resident in the memory and processors of a single motherboard.

The asynchronous nature of Algorithm~\ref{alg:aph} required a fresh
implementation, albeit one that is
integrated into the existing \texttt{mpi-sppy} system~\cite{mpisppy23} already
used to implement various forms of PH, including a Frank-Wolfe PH algorithm
based on~\cite{BCDELL17}.  To control parallelism within each MPI rank, we use C++
native multithreading with the \texttt{<thread>} and \texttt{<future>}
libraries; see for example~\cite{williams2019cpp}.  We partition the scenarios
of the stochastic programming problem approximately equally among the MPI
ranks, much as in~\cite{mpisppy23}: each scenario is assigned to a rank
at initialization and remains there, being processed only by that rank.

\subsection{Application thread architecture}

When scenario data is spread across the ranks of a distributed system, the
projection operations in lines~\ref{line:uproj}-\ref{line:phlikewupdate} of
Algorithm~\ref{alg:aph} require coordination of all ranks, because they use
and change information associated with every scenario.  This portion of the
algorithm can only be implemented in an essentially synchronous way.  However,
each individual rank does not need to fully synchronize these coordination operations with
its work solving subproblems.  De-synchronizing coordination and subproblem
responsibilities within each rank is the key to our design of an essentially
bottleneck-free implementation, despite the projection component of
Algorithm~\ref{alg:aph} having a synchronous flavor.

Conceptually, our implementation maintains a single ``control thread'' and a
configurable number $\Theta_{\mathrm{w}} \geq 1$ of ``worker threads'' on each
rank.  The worker threads' only responsibility is to execute
Algorithm~\ref{alg:nffcfw} to solve subproblems, with each worker thread being
able to process any scenario on its rank.  Further, when invoking the MILP
solver on line~\ref{line:LMOcall}, each worker thread authorizes the MILP
solver to use a total of $\Theta_{\mathrm{m}} \geq 1$ threads (including itself) 
to concurrently
explore the branch-and-cut tree.  The highest number of threads in use on a
single rank will occur when all $\Theta_{\mathrm{w}}$ worker threads are
active and currently solving a MILP, with each MILP solve currently using its
full allocation of $\Theta_{\mathrm{m}}$ threads: the resulting active thread
count is $\Theta_{\mathrm{w}}\Theta_{\mathrm{m}} + 1$, the $1$ being due
to the control thread.  
If $\kappa$ ranks are resident on each HPC node, then $\kappa(
\Theta_{\mathrm{w}}\Theta_{\mathrm{m}} + 1)$ should typically approximately equal
the number of concurrent compute threads $\kappa^*$ that the HPC node can
efficiently support.  A number of factors may influence $\kappa^*$ for a
particular application and problem instance; sometimes $\kappa^*$ may be less
than the number of CPU cores on the HPC node due to memory bandwidth
considerations.

In our implementation, communication between ranks is performed only by the
control threads, using non-blocking collective operations.  MPI collective
operations are those in which every member of a designated set of ranks,
called a \emph{communicator}, must participate.  The only such operations our
implementation currently uses are ``all reductions'' with the sum operator,
meaning that each rank in the communicator contributes a vector of integer or
floating point numbers, and at the end of the operation, each rank in the
communicator receives the sum of the vector over all ranks in the
communicator.  In the main flow of our computation, we exclusively employ
non-blocking collective operations, meaning that each rank initiates the
communication and then continues to other tasks while periodically checking
for completion.  The operation only begins when every other rank in the
communicator starts participating, and then completes at some later time.

\begin{figure}[t]
\includegraphics[width=\textwidth]{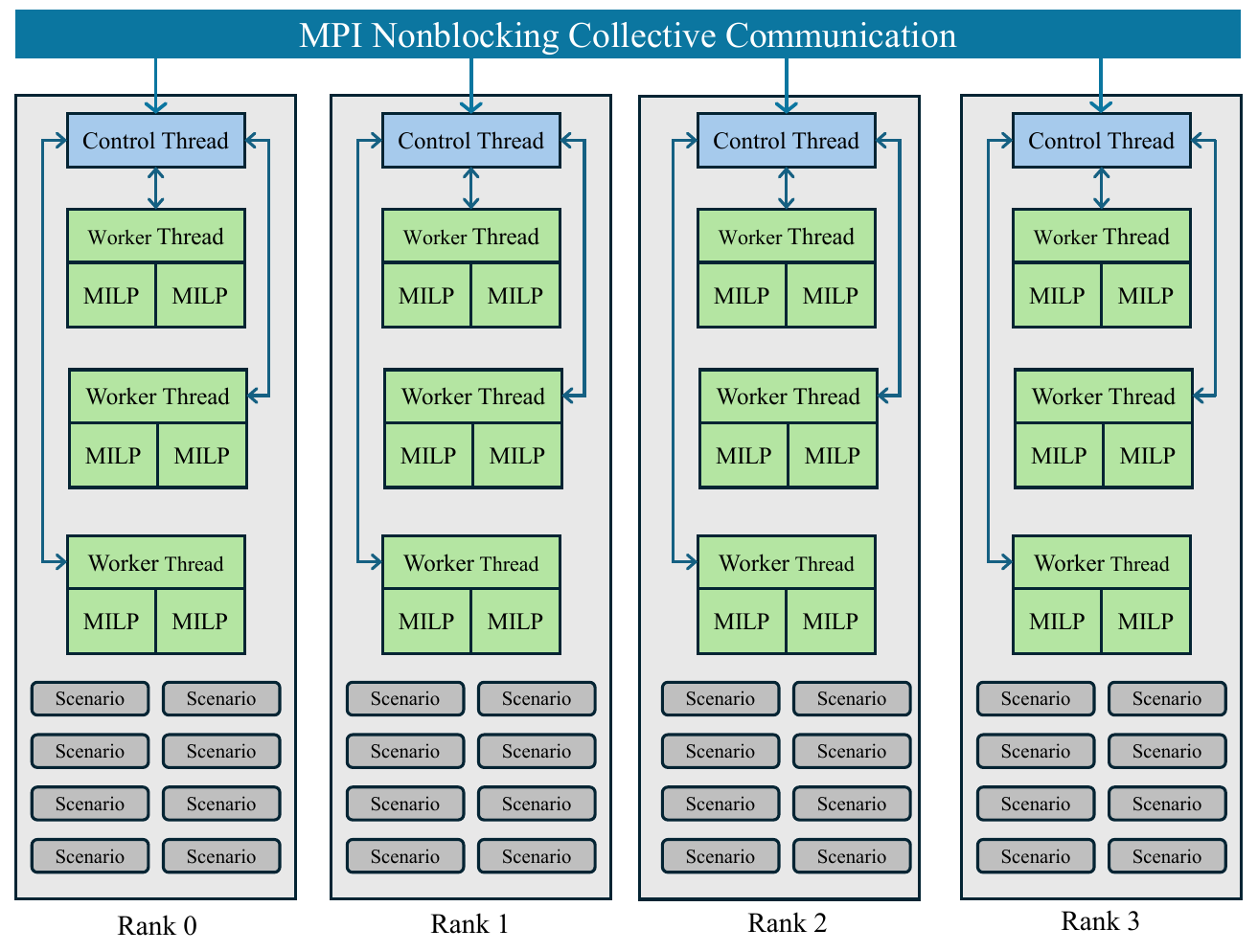}
\vspace{-4ex}
\caption{Schematic illustration of thread, rank, and communication organization.}
\label{fig:thread-org}
\end{figure}

Figure~\ref{fig:thread-org} depicts the thread and rank organization of the
implementation.  In the example shown in the figure, $8$ scenarios are
assigned to each rank, there are $\Theta_{\mathrm{w}} = 3$ worker threads per
rank, and each worker thread authorizes the MILP solver to use up to
$\Theta_{\mathrm{m}} = 3$ branch-and-cut threads.  Thus, at most $3 \cdot 3 +
1 = 10$ computation threads may be simultaneously active on any given rank
(however, additional threads will be used by the operating system on each node
and MPI background services on each rank).  As shown at the top of
the figure, all communication between ranks is by nonblocking collective
communication among the control threads.

\subsection{Scenario states}

Within each rank, each scenario cycles through four possible states, in the
order
\begin{equation*}
E \to A \to Q \to P \quad (\text{and then back to~} E),
\end{equation*}
interpreted as follows:
\begin{description}
\item[Eligible, $E$ ---] Eligible to be solved with the current multipliers $w^k$ 
and centers $z^k$ as input.
\item[Active, $A$ ---] Currently being solved by a worker thread.
\item[Queued, $Q$ ---] Solution complete, but not yet incorporated into any
projection / coordination calculations on
lines~\ref{line:uproj}-\ref{line:phlikewupdate} of Algorithm~\ref{alg:aph}.
In the notation of that algorithm, these scenarios will be members of $I_k$ during
the next projection / coordination cycle.
\item[In projection, $P$ ---]  Currently being used in ongoing projection /
coordination step, that is, part of $I_k$ for projection calculations
currently in progress.
\end{description}
At any given moment, every scenario assigned to a rank is in exactly one of
these four sets.

This design requires that the results of the most recent subproblem solve for
a given scenario $i$ must be incorporated into the iterates $w^k$ and $z^k$
before $i$ may be considered for solution again.  While this restriction is
not absolutely necessary to follow the template provided by
Algorithm~\ref{alg:aph}, it greatly simplifies the implementation,
preventing concurrent solves of the same scenario and thus eliminating
potential memory conflicts over the MILP and nonlinear solver data structures
associated with each scenario.

\subsection{Control event loop and coroutine structure}
\label{sec:eventLoop}

\begin{figure}[b]
\includegraphics[width=\textwidth]{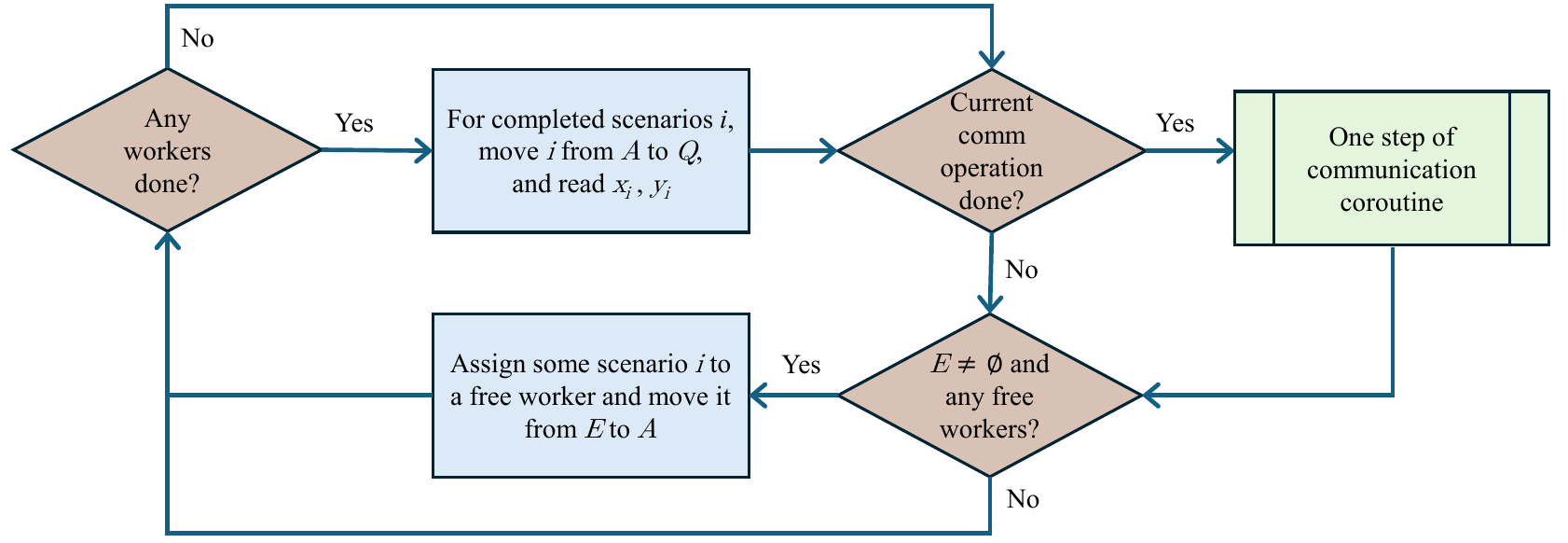}
\vspace{-4ex}
\caption{Flowchart of the principal operations in the control thread.}
\label{fig:control-loop}
\end{figure}

Figure~\ref{fig:control-loop} illustrates the main event loop executed by each
control thread.  The control thread repeatedly checks for the
following conditions and takes the following actions:
\begin{description}
\item[Completed scenario subproblem] 
(the ``yes'' branch from ``any workers done?'') 
If some subproblem $i$ has completed its
processing, incorporate its values of $x_i$ and $y_i$ into the projection
calculation data structures, move $i$ from $A$ to $Q$ (putting $i$ in the
queued state), and decrement the number of currently active subproblem solves
$\omega$.  The completion check triggering this operation polls a C++ thread status
object of type \texttt{std::future} associated with each scenario
in the active pool $A$.
\item[Completed communication operation]
(the ``yes'' branch from ``current comm operation done?'') If a communication
operation has just completed, call the communication coroutine (described
below) to advance to the next communication operation if necessary.  The check
triggering this operation uses \texttt{MPI\_Testall} to determine whether
all MPI operations associated with the currently running communication
operations have completed.
\item[Scenario dispatch]  If $E \neq \emptyset$ and $\omega <
\Theta_{\mathrm{w}}$, that is, there are scenarios eligible for solution and the
number of active subproblem solves is less than the maximum, remove some
scenario $i$ from $E$ using the safeguarded ``greedy'' heuristic
from~\cite[Section 5.3]{JE22}, initiate a subproblem solve on it using the
current values of $z_i^k$ and $w_i^k$, and increment the number of active
subproblem solves $\omega$.

To launch the asynchronous scenario solve, the
controller uses the \texttt{std::async} method from the C++ native
multithreading library, which returns a \texttt{std::future} status object
which may be subsequently polled to test subproblem completion.
\end{description}

The control thread loops indefinitely through these tests and actions until a
termination flag is raised.  If the last cycle of tests did not result in any
actions, the control thread sleeps for a configurable amount of time
defaulting to 20 milliseconds.  These pauses are intended to limit the CPU
load and memory traffic generated by the control thread when it is waiting for
actionable events to occur.  Further, if no communication is currently
ongoing, the control thread may elect to restart communication if a
configurable amount of time has passed or a subproblem has completed
processing.  For clarity and simplicity, the details of termination
detection, controller sleep, and communication restarts have been omitted from
the flowchart in Figure~\ref{fig:control-loop}.

Our actual control loop code was created by deriving from a generic
\texttt{groupController} class which we designed to manage any computation
pattern broadly similar to the APH method's.  This design should allow much of
our code to be reused in similar future applications, should any arise.

The communication coroutine mentioned above collectively cycles the control
threads of all the ranks through a network of communication states, as
depicted in Figure~\ref{fig:comm-states}. Aside from small amounts of
skew induced by communication latency, this cycling is essentially
synchronous.  These states are as follows:

\begin{figure}[t]
\includegraphics[width=\textwidth]{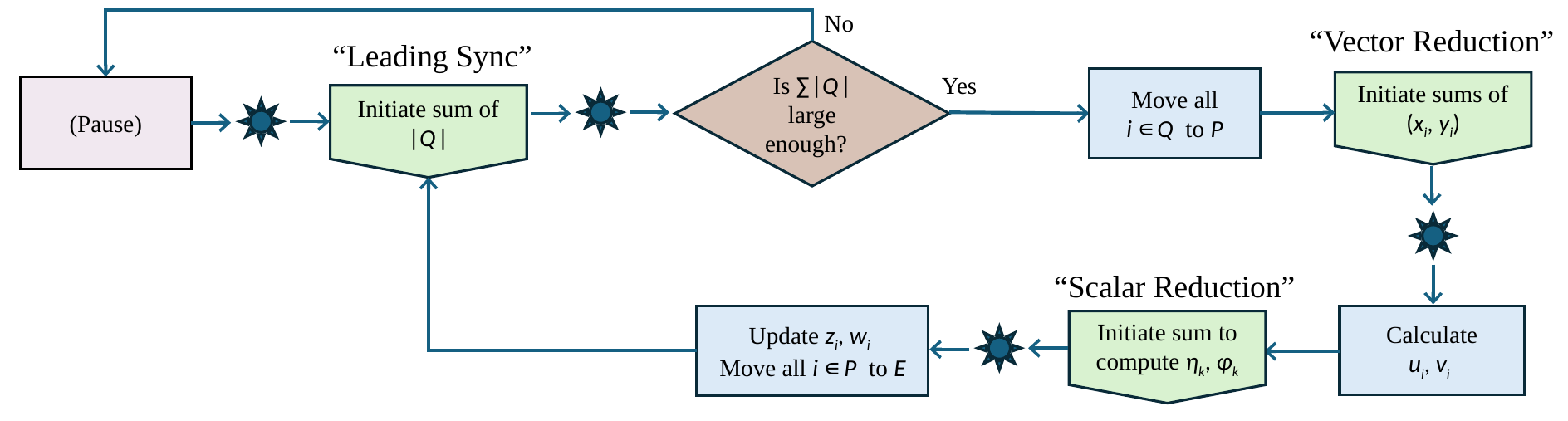}
\vspace{-5ex}
\caption{Flowchart of communication coroutine states and associated actions.}
\label{fig:comm-states}
\end{figure}

\begin{description}
\item[Leading sync]  In this communication state, the control threads
collectively check whether the number of completed subproblems $\card{Q}$, when
summed across all ranks, is sufficient to start a new coordination cycle.  If
not, communication pauses and then repeats the leading sync state.  If so, the
communication proceeds to the vector reduction state.
\item[Vector reduction] Calculates $u^k$ and $v^k$ on
lines~\ref{line:uproj}-\ref{line:vproj} of Algorithm~\ref{alg:aph}.
\item[Scalar reduction] Calculates $\eta_k$ and $\varphi_k$ on
lines~\ref{line:phlikedenom}-\ref{line:phlikenumer} of
Algorithm~\ref{alg:aph}.
\end{description}

The star symbols in Figure~\ref{fig:comm-states} indicate exit and reentry
points for the coroutine.  Every time the control thread main event loop in
Figure~\ref{fig:control-loop} invokes it, the coroutine starts
execution at one of the stars and exits at the next star it
encounters.  Before exiting, it initiates another communication operation (set to
trigger after a configurable delay in the case of the leftmost star in the
diagram).  Once this communication has completed, the next invocation of the coroutine resumes at the same star.  

We now describe further details of the processes described in the flowchart.
The MPI collective primitive \texttt{MPI\_Iallreduce} used in each step is a
non-blocking version of the all-reduce operation; this operation takes a
scalar or vector contributed by each rank in a communicator, combines them with
a specified operator (addition in all the cases used here), delivering the
result to all ranks in the communicator.  In most MPI environments, its time
complexity is logarithmic in the number of ranks in the communicator.

\paragraph{Implementation of leading sync.}  The leading sync uses
a summing \texttt{MPI\_Iallreduce} on a communicator consisting of all the
ranks, computing the sum of the integers $\card{Q}$, the number of completed
subproblems that have not yet been incorporated into the projection process.
Letting $\mathcal{R}$ be the set of all ranks and $Q_r$ be the set $Q$ on
rank $r$, the condition in the ``is $\sum\card{Q}$ big enough?'' decision
block in Figure~\ref{fig:comm-states} is more precisely written
\begin{equation} \label{solvefracond}
\sum_{r\in\mathcal{R}} \card{Q_r} \geq \sigma \ell,
\end{equation}
where $\sigma \in\; ]0,1]$ is a parameter called \texttt{solveFrac} in the
source code.  This parameter controls how aggressively the algorithm attempts to
update its principal working iterates $(z^k,w^k)$: if $\sigma$ is set very
small, for example $\sigma \leq 1/\ell$, then the method will attempt to
update the working iterates as soon as any new information becomes available
that could alter the separator used in the projection step.  Values of
$\sigma$ near $1$ will tend to limit how asynchronously the implementation
operates, since a projection will only be initiated once most of the scenarios
in the tree are in the queued state, and it is possible that some ranks will
run out of eligible scenarios to process while waiting
for~\eqref{solvefracond} to hold. The most extreme form of this phenomenon is
when $\sigma = 1$, in which case the implementation behaves synchronously,
waiting for every subproblem to be solved before initiating each coordination
step.  In this mode, the algorithm behaves like a straightforward parallel implementation
of the PH-FW algorithm of Boland \emph{et al.}~\cite{BCDELL17}, but with a
more complicated coordination procedure.  With intermediate values of
$\sigma$, the method will not attempt to use the communication network as
intensively as when $\sigma \leq 1/\ell$, but worker threads should only
rarely or briefly run out of eligible subproblems to work on, so the algorithm
will run essentially without bottlenecks.

\paragraph{Implementation of vector reduction.}  
At the beginning of the vector reduction step, all queued subproblems are
changed to the ``in projection'' state, that is, $P \leftarrow Q$ and $Q
\leftarrow \emptyset$. Thereafter, the vector reduction step's responsibility
is to compute $u^k$ and $v^k$ at lines~\ref{line:uproj}-\ref{line:vproj} of
Algorithm~\ref{alg:aph}.  Doing so in turn involves the
computations~\eqref{phprojformula} for $v^k$ and similar calculations embedded
within~\eqref{conjproj} for $u^k$.  Using a slightly more compact, vector
notation that omits the individual coefficient indices $j$, these calculations
are
\begin{align} 
(\forall\,N\in \mathcal{U}) \;
\big(\forall\,i\in \mathcal{D}(N)\big) && \quad
u_{it(N)}^{k} &= x_{it(N)}^{k} - \frac{1}{\pi(N)}
   \sum_{i'\in \mathcal{D}(N)} \!\!\! \pi_{i'} x_{i't(N)}^{k} 
&&&&&& \label{vecprojformulau} \\
&&
v_{it(N)}^{k} &= \frac{1}{\pi(N)}
   \sum_{i'\in \mathcal{D}(N)} \!\!\! \pi_{i'} y_{i't(N)}^{k}. \label{vecprojformulav}
\end{align}
Let $\overline{\mathcal{U}} \subseteq \mathcal{U}$ denote the non-leaf nodes
whose scenarios are assigned to more than one rank.  Our implementation
performs the portions of the
calculations~\eqref{vecprojformulau}-\eqref{vecprojformulav} that are not
local within ranks by launching $\card{\overline{\mathcal{U}}}$ concurrent
non-blocking MPI all-reductions, one for each $N \in \overline{\mathcal{U}}$.
The procedure broadly resembles the one used for the $\mathcal{N}$-projection
operation in \texttt{mpi-sppy}~\cite{mpisppy23}, except that the reduced vectors
are twice as long because two algorithm vectors $u^k, v^k \in
\real^m$ must be calculated instead of a single vector
$\proj_{\mathcal{N}}^{(\pi)}(x^k) \in \real^m$.  Let $\mathcal{I}(r)$ denote
the set of scenarios assigned to MPI rank $r$.

\paragraph{Implementation of scalar reduction.}  After detecting completion
of all the reductions required for the vector reduction phase, the next
communication step is scalar reduction, in which the principal work is
computing $\eta_k$ and $\varphi_k$ on 
lines~\ref{line:phlikedenom}-\ref{line:phlikenumer} of Algorithm~\ref{alg:aph}.  
The non-rank-local portions
of these calculations are implemented by a single reduction over all ranks
with vector length two.
The final stepsize determination on line~\ref{line:phlikestepsize}, 
requiring only a few scalar operations, is replicated across all ranks.
The actual projection steps on
lines~\ref{line:phlikezupdate}-\ref{line:phlikewupdate} may then be executed
in a parallel, distributed manner across all ranks, with no further
communication required.  After the projection, all scenarios in $P$ are moved to
$E$ to indicate they are once again eligible for processing, and $P$ becomes
empty. The communication state returns to the leading sync phase, to check
whether enough newly solved scenarios have accumulated to start another
projection.

\subsection{Conformance to the assumptions of Propositions~\ref{prop:phlikeconverge}
            and~\ref{prop:phlikeconverge2}} 

This section discusses the conditions under which the convergence result in
Proposition~\ref{prop:phlikeconverge2} applies to the implementation; these
conditions are identical to those of Proposition~\ref{prop:phlikeconverge}
other than the choice of subproblem subroutine $\mathcal{S}$.

For this discussion, it is first necessary to describe in detail the
``safeguarded greedy'' subproblem selection heuristic from~\cite[Section
5.3]{JE22} that the control thread uses to choose the next scenario to be
solved. In the current setting and notation, the safeguarded greedy selection
method works as follows: for some integer parameter $B' \geq 0$, form for each
$i\in E$ the two-element vector
\begin{equation} \label{selection}
\begin{bmatrix}
\min\big\{0, B' - k + d(i,k) \big\} \\
\pi_i (z_i^k-x^k_i)\transpose(w^k_i-y_i^k)
\end{bmatrix},
\end{equation}
where $d(i,k)$ is the last iteration at which scenario $i$ was dispatched for
solution; then, select the $i$ for which the vector is lexically smallest.  If
every eligible subproblem has been dispatched within the last $B'$ iterations,
the first element of all these vectors will be zero, and priority will be
given to the smallest value of $\pi_i (z_i^k-x^k_i)\transpose(w^k_i-y_i^k)$; a
rationale for this priority is given in~\cite[Section 5.3]{JE22}.  However, if
there exists an eligible scenario that has not been dispatched for processing
within the last $B'$ iterations, then one of the ``oldest'' subproblems, in the
sense of having the smallest value of $d(i,k)$, will be chosen.

From the form of this procedure, it is clear that any given subproblem $i$
will be selected at least once in every $B' + 1 + \card{\mathcal{I}(r)} \leq
B' + 1 + I_{\max}$ times that rank $r$ has a subproblem to dispatch, where
$I_{\max} \doteq \max\big\{\card{\mathcal{I}(r)} : r\in \mathcal{R}\big\}$ is
the largest number of scenarios assigned to a rank.

Further, assume that there is some minimum time $t_{\min}$ and maximum time
$t_{\max}$ that it takes to process a scenario.  Then the implementation falls
under the class of generic asynchronous projective splitting implementations
envisioned in~\cite{Eck17} and in particular conforms to the assumptions of
Propositions~\ref{prop:phlikeconverge} and~\ref{prop:phlikeconverge2} by the
arguments given in~\cite[Proposition 3.1]{Eck17} (with some changes of
notation).

\subsection{Startup and shutdown}
\label{sec:startupShutdown}

Our implementation
starts with an ``Iter0'' procedure that synchronously solves every subproblem
without any objective function perturbations. Specifically, each rank $r$
solves $\min_{x_i\in K_i} h_i(x_i)$ for each $i\in\mathcal{I}(r)$. The initial
iterate $z^0$ is then set to the probability-weighted projection of the
resulting ensemble of solutions $\{x_i^0\}_{i=1}^\ell$ onto the
nonclairvoyant subspace $\mathcal{N}$, using one MPI reduction per
non-leaf node.
This procedure has some practical advantages: it verifies that every 
subproblem is solvable and gives every scenario $i$ a reasonable starting
value for $z_i$. Further, the probability-weighted sum of Iter0 objective
values yields a Lagrangian lower bound on the objective with $w = 0$,
equivalent to the optimal value of the relaxation of~\eqref{discreteSP} with
the constraints $Mx \in \mathcal{N}$ removed.  This initial bound may be computed with
minimal incremental effort, essentially a single scalar reduction operation.
After the Iter0 procedure is complete, the control threads enter the main
event loop described in Section~\ref{sec:eventLoop} and the algorithm begins
operating asynchronously.

Termination is detected at the
end of the scalar reduction phase of the communication coroutine.  One
condition for termination is that the scalar $\eta_k$ computed on
line~\ref{line:phlikedenom} of Algorithm~\ref{alg:aph} is below some small
tolerance, which means that the primal vector $x^k$ is very close to being
nonclairvoyant and $y^k$ is nearly in $\mathcal{N}^*$, implying that it is very
close to being a valid choice for the Lagrange multipliers.  
At the same point in the code that tests $\eta_k$ for termination, the current
implementation also checks whether a specified maximum iteration
count or wall clock time has been reached. 
When our method is running as a hub solver within \texttt{mpi-sppy},
termination can also be triggered by conditions checked elsewhere in the current 
collection of \texttt{mpi-sppy} processes; however, the experiments in the next section
only consider the behavior of our algorithm in isolation.

If termination is detected, the scalar reduction logic in the communication
coroutine raises a termination flag.  All our termination checks are based on
quantities that are the same in every rank at the end of the scalar reduction
step, so that the decision to terminate is necessarily unanimous among all ranks.

Once the termination flag is raised, the control thread signals an abort to
any subproblem solves currently in progress: the underlying solver is interrupted
asynchronously using its thread-safe termination mechanism, the Frank-Wolfe
inner loop stops at the next opportunity, and any partial results are
discarded.  Letting $w^{\mathrm{last}}$ denote the value of $w^k$ when termination is detected, each rank then embarks on computing
\begin{equation*}
\lambda_i \doteq 
  \min_{x_i\in K_i} \big\{ h_i(x_i) + (w_i^{\mathrm{last}})\transpose \overline{M} x_i \big\} 
\end{equation*}
for each scenario $i\in \mathcal{I}(r)$ assigned to it.  These solves are
initiated as soon as any free worker threads are available, even if some
main-event-loop scenario solves have not yet terminated.  Once each rank $r$ has
computed all its $\lambda_i, i\in \mathcal{I}(r)$, it computes the sum
$\sum_{i\in \mathcal{I}(r)} \pi_i \lambda_i$.  It then enters this value into
a single (blocking) sum reduction that computes the Lagrangian
bound~\eqref{lagdualw} for $w = w^{\mathrm{last}}$, serving as the run's
final estimate of the best Lagrangian bound.  When our implementation runs
as an \texttt{mpi-sppy} hub process, the final Lagrangian computation 
may be skipped, since an \texttt{mpi-sppy} Lagrangian spoke process may be used 
to compute bounds continuously as the method runs.

In summary, the startup operation is synchronous.  
When computing the final Lagrangian bound, the
termination procedure becomes synchronous at the final blocking reduction;
when running as an \texttt{mpi-sppy} hub, the final Lagrangian computation 
may be skipped entirely, so that
shutdown involves no synchronization beyond end-of-run
performance statistics reductions.  Between startup and shutdown,
the implementation runs asynchronously to the extent allowed by~$\sigma$ 
and a runtime parameter \texttt{-{}-block-sync} described below.

\section{Sample computational testing}
\label{sec:computational_testing}
This section reports some small-scale computational testing on an ``SSLP'' test
problem~\cite{NS05,NS08,SIPLIB}.  At the time of writing, plans for much 
larger-scale testing are being formulated.  
The SSLP instances are two-stage problems designated
\texttt{SSLP\_}$s$\texttt{\_}$c$\texttt{\_}$\ell$.  Such an instance has $s$
first-stage variables, all binary, indicating whether to install servers at
each of $s$ possible locations.  There are then $\ell$ possible demand
scenarios distributed over $c$ clients to be served from the facilities, with
the second stage having a mix of binary and continuous variables, the majority
being binary.  The particular problem instance presented here is 
\texttt{pysp\_sslp\_10\_50\_50}, which has $10$ candidate servers,
$50$ clients, and $50$ equally likely scenarios.  Its data differ slightly
from the \texttt{sslp\_10\_50\_50} instance in
SIPLIB,\footnote{\texttt{Pysp\_sslp\_10\_50\_50} is the version of the
instance distributed with PySP~\cite{watson2012pysp} and read by
\texttt{mpi-sppy}'s \texttt{sslp} example.  Its data differ from SIPLIB's
only for client $1$ at servers $6$ through $10$, where revenue and demand
coincide (as they do throughout both versions of the instance): 
these values are $3, 22, 20,
1$, and $10$ in the PySP data (a copy of client $2$'s values at servers $1$
through $5$), versus $14, 5, 2, 9$, and $1$ in SIPLIB. } 
but we observed very similar computational results for the SIPLIB
version of the instance.  
The $10$ server-location binaries are the only
nonclairvoyant variables; the remaining variables are $500$ client-assignment
binaries, one per server-client pair, and $10$ continuous overflow variables.
Each scenario subproblem thus has $60$ constraints and $520$ variables, $510$
of them binary.

On this instance, we tested $6$ algorithm configurations.  Each test was run on a
$16$-thread, $8$-core laptop, dedicating $14$ threads to solving MILP problems
with the standard Gurobi 13.0.2 solver.  The algorithm configurations were:

\paragraph{The PH benchmark.}  The ``PH benchmark'' is the Frank-Wolfe progressive
hedging (FWPH) hub of \texttt{mpi-sppy}~\cite{mpisppy23}, an
implementation based on the algorithm of Boland \emph{et
al.}~\cite{BCDELL17}, using progressive hedging in the outer loop,
with each subproblem solved by a fully corrective Frank-Wolfe method.
Like most of \texttt{mpi-sppy}, it is implemented in Python with heavy
use of libraries and solvers written in compiled languages.  We ran
this configuration with $14$ MPI ranks, each using a single thread to
solve one MILP at a time, otherwise with \texttt{mpi-sppy}'s default settings.

\paragraph{Various configurations of APH-FW.}  The remaining configurations
used the C++ implementation of APH-FW described in
Section~\ref{sec:implementation_approach}, with one Frank-Wolfe iteration per
subproblem solve ($l_{\max} = 1$) in Algorithm~\ref{alg:nffcfw}. These runs
used $7$ MPI ranks, each with $2$ workers, under $5$ different parameter
configurations:
\begin{itemize}
\item $\sigma = 1$, which we call the ``APH baseline'', which waits for
completion of every scenario before each projection and is therefore synchronous;
\item $\sigma = 0.7$, $0.5$ and $0.3$, which project once that share of the
  scenarios has been solved, displaying both block and lag asynchrony;
\item $\sigma = 0.3$ ``forced synchronous,'' which modifies the $\sigma = 0.3$
  configuration with the runtime parameter \texttt{-{}-block-sync}.  In this
  mode, the control thread for each rank $r$ stops dispatching new subproblems
  after the first $\lceil \sigma \card{\mathcal{I}(r)} \, \rceil$ solves in
  each projection cycle, waiting until all of them and the next projection
  step have completed to dispatch more.  This configuration results in block
  asynchrony, but no lag asynchrony.
\end{itemize}

Because the problem instance has only $2^{10} = 1024$ choices of the nonclairvoyant
variables, solving every scenario at each of them can be used to certify both
the optimal value, $-369.94$, and the Lagrangian dual value, which in this
case is also $-369.94$ (so the instance has a zero Lagrangian duality gap).

Every configuration uses $\rho = 1$ for every nonclairvoyant variable and runs
$14$ single-threaded subproblem solves with a time limit of $360$ seconds: the PH
benchmark on $14$ ranks, APH-FW on $7$ ranks of $2$ workers, with
the primal-dual scaling parameter $\xi$ at its default of $1$.  
All configurations solve their Frank-Wolfe linear minimizations at a relative
MIP gap of $10^{-4}$ and use the \texttt{mpi-sppy} objective cutoff option
(a constraint bounding each linear minimization's objective by its value at
the current point, plus a relative slack of $10^{-4}$).

Results are shown in Figure~\ref{fig:ScaledPysp}.  Its horizontal axis is
wall-clock time in seconds, and the vertical axis is the value of the best
Lagrangian bound found so far at the current time.  APH-FW only computes
Lagrangian bounds at the end of its run since they are not a natural byproduct
of its basic calculation cycle, so the displayed values were computed from a
log file after termination.  ``Solve fraction'' in the legend denotes the
value of $\sigma$.

\begin{figure}[tbp]
\vspace{1ex}
\centering
\includegraphics[width=\textwidth]{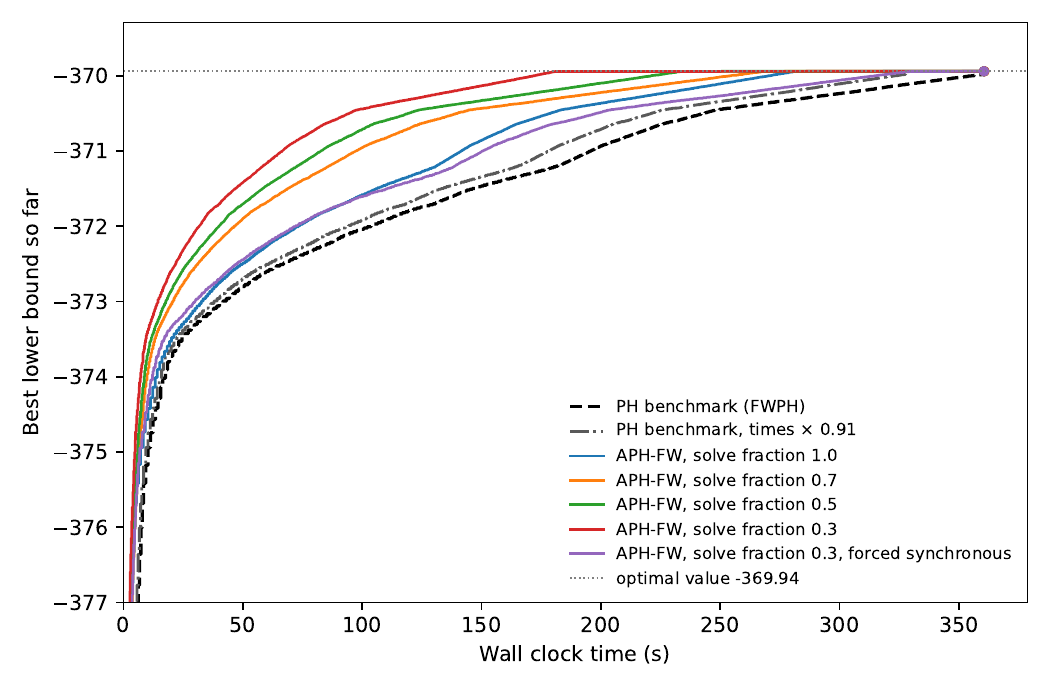}
\vspace{-6ex}
\caption{Lagrangian bound value plotted against wall clock time for
        \texttt{pysp\_sslp\_10\_50\_50}. }
\label{fig:ScaledPysp}
\end{figure}

On this instance, we measured about $9\%$ of the PH benchmark's run time to be
spent in Python,
so a C++ implementation would be at most about
$9\%$ faster.  The figure therefore shows the PH benchmark twice: as measured
(the black dashed line) and with its times multiplied by $0.91$ (the gray
dotted-dashed line); the actual performance of a C++ implementation of the PH
benchmark directly comparable to our C++ implementation of APH-FW would be
expected to lie somewhere between these two curves.

The PH benchmark, with or without the Python-versus-C++ adjustment to its
runtime, is the slowest algorithm variant to reach the optimal value,
indicated by the horizontal dotted line.  
The fastest configuration in the tests is APH-FW with $\sigma=0.3$, followed
in order by $\sigma=0.5$, $\sigma=0.7$, and the $\sigma=1.0$ APH baseline.
Notably, $\sigma=0.3$ in the forced-synchronous mode is the slowest APH
configuration, initially producing slightly better bounds than the APH
baseline, but then falling behind. Time-averaged over each run, the number of
Frank-Wolfe MILPs being concurrently solved is $12.1$ for $\sigma = 0.3$,
$11.4$ for $\sigma = 0.5$, $10.9$ for $\sigma = 0.7$, $8.6$ for the $\sigma =
1.0$ APH baseline, and $7.1$ for $\sigma = 0.3$ forced synchronous.  These
statistics follow the same ordering as the apparent speed of convergence in
Figure~\ref{fig:ScaledPysp}.
The $\sigma = 0.3$ configuration was able to keep the MIP solver threads busy
approximately $12.1/14 \approx 86\%$ of the time, whereas the APH baseline
was only able to occupy them about $8.6/14 \approx 61\%$ of the time. Taken
together, the results suggest that there is significant value in
asynchronism.

\section{Conclusions and future work}
\label{sec:conclusions_and_future_work}
Our computational results suggest that, in contexts with highly variable
subproblem solution times such as when computing Lagrangian bounds on
stochastic integer programming problems, asynchronous operation of a
decomposition method can yield significant performance improvements.  In our
case, provably convergent asynchronism is enabled by the underlying algorithmic
results in~\cite{ComEck15}, meaning that the maximum asynchronism lag does not
constrain any of the algorithm parameters as it does in alternative methods
like the ARock-based algorithm in~\cite{BLetal20}.  Our method's lag independence 
is enabled by the synchronous nature of the updates on
lines~\ref{line:uproj}-\ref{line:phlikewupdate} of Algorithm~\ref{alg:aph}.
However, this apparently synchronous feature does not prevent a careful
implementation from operating close to fully asynchronously, as
demonstrated by our implementation using within-rank multithreading and
nonblocking MPI collective operations.

Involving just one problem instance on a laptop, the computational results
presented in Section~\ref{sec:computational_testing} are highly preliminary,
and work to expand and improve them is ongoing.  In particular, we plan to
conduct much larger-scale runs on much larger stochastic integer programming
instances, especially instances for which there are no practical
alternatives to using some kind of decomposition method.  We also plan on runs
using many more processor cores and on multiple families of test problems.

We also envision eventual improvements to our implementation, such as
computing progressively improving upper and lower bounds on the optimal
objective value throughout a run, as opposed to a single lower bound at the
end as at present. This capability could be added through the existing
``cylinder'' features of \texttt{mpi-sppy}, but we are also contemplating
alternatives.  Further, if we find in some cases that the vector reduction
phase of our communication cycle limits the efficiency of our method, we may
also consider alternative ways of organizing that communication operation.

Finally, some theoretical improvements could also benefit our method. The
theory of approximate subproblem solutions in~\cite{ComEck15} and~\cite{Eck17}
uses an ``in-graph'' relative error criterion to prove convergence.  For our
application, however, as noted in~\cite{Eck2026gaps} in the case of PH and
related algorithms, an ``out-of-graph'' absolute error criterion would yield a
more realistic and satisfying convergence theory.  In this regard, recent
results such as those in~\cite{CM26} may be a helpful starting point.

\bibliographystyle{abbrv}
\bibliography{prox,general}

\end{document}